
\documentclass[12pt,a4paper,twoside]{article}

\title{\sc Poincar\'e meets Korn via Maxwell:\\
Extending Korn's First Inequality\\
to Incompatible Tensor Fields}
\def\shorttitle{Poincar\'e meets Korn via Maxwell}
\def\pauthor{Patrizio Neff, Dirk Pauly, Karl-Josef Witsch}

\def\mylabelonoff{off}
\def\allowdisbrk{no}

\usepackage{a4,exscale,ifthen,amsfonts,amssymb,amsmath,amscd,graphicx}
\usepackage[english]{babel}
\usepackage[mathscr]{eucal}

\setlength{\textwidth}{16cm}
\setlength{\textheight}{22cm}
\setlength{\oddsidemargin}{-0.1cm}
\setlength{\evensidemargin}{-0.1cm}

\author{{\sf\pauthor}}
\pagestyle{myheadings}
\markboth{\pauthor}{\shorttitle}

\setcounter{footnote}{1}
\numberwithin{equation}{section}

\newenvironment{acknow}{{\vspace*{1cm}\noindent\bf Acknowledgements }}{}
\newcommand{\bewboxw}{\mbox{}\hfill $\square$ \\}

\newenvironment{proof}{{\noindent\bf Proof }}{\bewboxw}

\newcommand{\keywords}[1]{{\noindent\bf Key Words }#1}

\ifthenelse{\equal{\mylabelonoff}{on}}
{\newcommand{\mylabel}[1]{\label{#1}\fbox{{\rm #1}}}}{\newcommand{\mylabel}[1]{\label{#1}\makebox[0mm][]{}}}
\ifthenelse{\equal{\allowdisbrk}{yes}}
{\allowdisplaybreaks}{}


\usepackage[mathscr]{eucal}
\usepackage{color}
\usepackage[all]{xy}

\newcommand{\ds}{\displaystyle}
\newcommand{\ol}{\overline}
\newcommand{\ul}{\underline}

\newcommand{\nz}{\mathbb{N}}

\newcommand{\rz}{\mathbb{R}}

\newcommand{\rt}{\rz^3}
\newcommand{\rttt}{\rz^{3\times3}}

\newcommand{\rN}{\rz^N}

\DeclareMathOperator{\p}{\partial}

\newcommand{\na}{\nabla}

\DeclareMathOperator{\ed}{d}

\DeclareMathOperator{\cd}{\delta}

\DeclareMathOperator{\grad}{grad}
\DeclareMathOperator{\Grad}{Grad}

\DeclareMathOperator{\curl}{curl}
\DeclareMathOperator{\Curl}{Curl}

\renewcommand{\div}{\operatorname{div}}
\DeclareMathOperator{\Div}{Div}

\newcommand{\trans}[1]{{#1}^\top}
\newcommand{\T}{T}
\newcommand{\TS}{S}
\newcommand{\TR}{R}

\newcommand{\TP}{P}
\newcommand{\TF}{F}
\newcommand{\Tskew}{S}

\newcommand{\om}{\Omega}
\newcommand{\dom}{\p\!\om}
\newcommand{\omb}{\ol{\om}}
\newcommand{\ga}{\Gamma}
\newcommand{\gat}{\ga_{t}}
\newcommand{\gan}{\ga_{n}}
\newcommand{\gatj}{\ga_{t,j}}

\newcommand{\equi}{\Leftrightarrow}
\def\impl{\Rightarrow}

\newcommand{\qequi}{\quad\equi\quad}

\newcommand{\qimpl}{\quad\impl\quad}

\DeclareMathOperator{\supp}{supp}
\DeclareMathOperator{\sym}{sym}
\DeclareMathOperator{\dist}{dist}
\DeclareMathOperator{\interior}{int}
\DeclareMathOperator{\tr}{tr}
\renewcommand{\skew}{\operatorname{skew}}

\newcommand{\dvec}[3]{\begin{bmatrix}#1\\#2\\#3\end{bmatrix}}

\newcommand{\dmat}[9]{\begin{bmatrix}#1&#2&#3\\#4&#5&#6\\#7&#8&#9\end{bmatrix}}
\DeclareMathOperator{\id}{id}
\newcommand{\dl}{\,d\lambda}
\DeclareMathOperator{\SO}{SO}
\DeclareMathOperator{\so}{\mathfrak{so}}
\DeclareMathOperator{\dev}{dev}
\def\set#1#2{\{#1\,:\,#2\}}

\newcommand{\ch}{\hat{c}}
\newcommand{\ct}{\tilde{c}}

\newcommand{\cp}{c_{\mathtt{p}}}
\newcommand{\cpq}{c_{\mathtt{p},q}}

\newcommand{\cpz}{c_{\mathtt{p},0}}
\newcommand{\cpo}{c_{\mathtt{p},1}}
\newcommand{\cm}{c_{\mathtt{m}}}

\newcommand{\cpf}[1]{c_{\mathtt{pf},#1}}
\newcommand{\ck}{c_{\mathtt{k}}}
\newcommand{\ckF}{c_{\mathtt{k},\TF}}
\newcommand{\ckt}{c_{\mathtt{k},t}}
\newcommand{\cktF}{c_{\mathtt{k},t,\TF}}
\newcommand{\cks}{c_{\mathtt{k},s}}
\newcommand{\cksF}{c_{\mathtt{k},s,\TF}}
\newcommand{\cktj}{c_{\mathtt{k},t,j}}
\newcommand{\cksj}{c_{\mathtt{k},s,j}}

\newcommand{\chF}{\ch_{\TF}}
\newcommand{\cF}{c_{\TF}}

\DeclareMathOperator{\Lebesgue}{\mathsf{L}}
\newcommand{\Lgen}[2]{\Lebesgue^{#1}_{#2}}
\def\Li{\Lgen{\infty}{}}
\def\Liom{\Li(\om)}
\def\Lo{\Lgen{1}{}}
\def\Loom{\Lo(\om)}
\def\Lt{\Lgen{2}{}}
\def\Ltom{\Lt(\om)}

\def\Lq{\Lgen{q}{}}
\def\Lqom{\Lq(\om)}
\def\Lp{\Lgen{p}{}}
\def\Lpom{\Lp(\om)}

\newcommand{\qLt}[1]{\Lgen{2,#1}{}}
\newcommand{\Ltq}{\qLt{q}}
\newcommand{\Ltqpo}{\qLt{q+1}}
\newcommand{\Ltqmo}{\qLt{q-1}}
\newcommand{\Ltqom}{\Ltq(\om)}
\newcommand{\Ltqpoom}{\Ltqpo(\om)}
\newcommand{\Ltqmoom}{\Ltqmo(\om)}

\DeclareMathOperator{\DSobolev}{\mathsf{D}}
\newcommand{\Dgen}[3]{\overset{#3}{\DSobolev}{}^{#1}_{#2}}
\newcommand{\qD}[1]{\Dgen{#1}{}{}}

\newcommand{\qDc}[1]{\Dgen{#1}{}{\circ}}
\newcommand{\qDcz}[1]{\Dgen{#1}{0}{\circ}}
\newcommand{\Dq}{\qD{q}}

\newcommand{\Dqc}{\qDc{q}}

\newcommand{\Dqcz}{\qDcz{q}}

\newcommand{\Dqom}{\Dq(\om)}

\newcommand{\Dqcgatom}{\Dqc(\gat,\om)}
\newcommand{\Dqczgatom}{\Dqcz(\gat,\om)}

\DeclareMathOperator{\DeSobolev}{\Delta}
\newcommand{\Degen}[3]{\overset{#3}{\DeSobolev}{}^{#1}_{#2}}
\newcommand{\qDe}[1]{\Degen{#1}{}{}}
\newcommand{\qDec}[1]{\Degen{#1}{}{\circ}}

\newcommand{\qDecz}[1]{\Degen{#1}{0}{\circ}}
\newcommand{\Deq}{\qDe{q}}

\newcommand{\Deqc}{\qDec{q}}

\newcommand{\Deqcz}{\qDecz{q}}

\newcommand{\Deqom}{\Deq(\om)}

\newcommand{\Deqcganom}{\Deqc(\gan,\om)}

\newcommand{\Deqczganom}{\Deqcz(\gan,\om)}

\DeclareMathOperator{\Sobolev}{\mathsf{H}}
\newcommand{\Hgen}[3]{\overset{#3}{\Sobolev}{}^{#1}_{#2}}
\def\Ho{\Hgen{1}{}{}}

\def\Hoom{\Ho(\om)}

\def\Hoc{\Hgen{1}{}{\circ}}
\def\Hocom{\Hoc(\om)}
\def\Hocgatom{\Hoc(\gat;\om)}

\DeclareMathOperator{\WSobolev}{\mathsf{W}}
\newcommand{\Wgen}[3]{\overset{#3}{\WSobolev}{}^{#1}_{#2}}
\newcommand{\Wo}[1]{\Wgen{1,#1}{}{}}

\newcommand{\Woom}[1]{\Wo{#1}(\om)}

\DeclareMathOperator{\Cont}{\mathsf{C}}
\newcommand{\Cgen}[2]{\overset{#2}{\Cont}{}^{#1}}
\def\Cz{\Cgen{0}{}}

\def\Czomb{\Cz(\ol{\om})}
\def\Ci{\Cgen{\infty}{}}
\def\Cic{\Cgen{\infty}{\circ}}

\def\Co{\Cgen{1}{}}
\def\Coc{\Cgen{1}{\circ}}

\def\Cicom{\Cic(\om)}

\def\Cicgatom{\Cic(\gat,\om)}
\def\Cicganom{\Cic(\gan,\om)}

\def\Cocom{\Coc(\om)}

\DeclareMathOperator{\dirichlet}{\mathcal{H}}
\newcommand{\qharmdi}[2]{\dirichlet^{#1}_{#2}(\om)}
\newcommand{\harmdi}{\qharmdi{}{}}

\newcommand{\harmdiq}{\qharmdi{q}{}}

\newcommand{\Hggen}[3]{\overset{#2}{\Sobolev}(\grad;#3)}
\newcommand{\HGgen}[3]{\overset{#2}{\Sobolev}(\Grad;#3)}
\newcommand{\Hcgen}[3]{\overset{#2}{\Sobolev}(\curl_{#1};#3)}
\newcommand{\HCgen}[3]{\overset{#2}{\Sobolev}(\Curl_{#1};#3)}
\newcommand{\Hdgen}[3]{\overset{#2}{\Sobolev}(\div_{#1};#3)}
\newcommand{\HDgen}[3]{\overset{#2}{\Sobolev}(\Div_{#1};#3)}

\newcommand{\Hgom}{\Hggen{}{}{\om}}
\newcommand{\HGom}{\HGgen{}{}{\om}}
\newcommand{\Hcom}{\Hcgen{}{}{\om}}
\newcommand{\HCom}{\HCgen{}{}{\om}}
\newcommand{\Hdom}{\Hdgen{}{}{\om}}
\newcommand{\HDom}{\HDgen{}{}{\om}}
\newcommand{\Hgcom}{\Hggen{}{\circ}{\om}}

\newcommand{\Hccom}{\Hcgen{}{\circ}{\om}}
\newcommand{\HCcom}{\HCgen{}{\circ}{\om}}
\newcommand{\Hdcom}{\Hdgen{}{\circ}{\om}}

\newcommand{\Hczom}{\Hcgen{0}{}{\om}}
\newcommand{\HCzom}{\HCgen{0}{}{\om}}
\newcommand{\Hdzom}{\Hdgen{0}{}{\om}}
\newcommand{\HDzom}{\HDgen{0}{}{\om}}

\newcommand{\HDczom}{\HDgen{0}{\circ}{\om}}
\newcommand{\Hgcgatom}{\Hggen{}{\circ}{\gat,\om}}
\newcommand{\Hgcganom}{\Hggen{}{\circ}{\gan,\om}}
\newcommand{\HGcgatom}{\HGgen{}{\circ}{\gat,\om}}

\newcommand{\Hccgatom}{\Hcgen{}{\circ}{\gat,\om}}
\newcommand{\HCcgatom}{\HCgen{}{\circ}{\gat,\om}}
\newcommand{\Hdcgatom}{\Hdgen{}{\circ}{\gat,\om}}
\newcommand{\Hdcganom}{\Hdgen{}{\circ}{\gan,\om}}
\newcommand{\HDcganom}{\HDgen{}{\circ}{\gan,\om}}
\newcommand{\Hcczgatom}{\Hcgen{0}{\circ}{\gat,\om}}
\newcommand{\HCczgatom}{\HCgen{0}{\circ}{\gat,\om}}
\newcommand{\Hdczganom}{\Hdgen{0}{\circ}{\gan,\om}}
\newcommand{\HDczganom}{\HDgen{0}{\circ}{\gan,\om}}
\newcommand{\Hccganom}{\Hcgen{}{\circ}{\gan,\om}}
\newcommand{\HCcganom}{\HCgen{}{\circ}{\gan,\om}}

\DeclareMathOperator{\XSobolev}{\mathsf{X}}

\newcommand{\Xom}{\XSobolev(\om)}

\newcommand{\sfS}{\mathsf{S}}
\newcommand{\Som}{\sfS(\om)}
\newcommand{\Szom}{\sfS_{0}(\om)}
\newcommand{\RM}{\mathsf{RM}}
\newcommand{\BD}{\mathsf{BD}}
\newcommand{\BDom}{\BD(\om)}
\newcommand{\BV}{\mathsf{BV}}
\newcommand{\BVom}{\BV(\om)}

\newcommand{\normdst}{\hspace{-0.4ex}}
\newcommand{\scp}[2]{\left\langle#1,#2\right\rangle}
\newcommand{\scps}[2]{\langle#1,#2\rangle}
\newcommand{\scpom}[2]{\scp{#1}{#2}_{\om}}
\newcommand{\scpLtom}[2]{\scp{#1}{#2}_{\Ltom}}

\newcommand{\norm}[1]{\left|\normdst\left|#1\right|\normdst\right|}

\newcommand{\dnorm}[1]{\left|\normdst\left|\normdst\left|#1\right|\normdst\right|\normdst\right|}

\newcommand{\dnormF}[1]{\dnorm{#1}_{\TF}}

\newcommand{\normLtom}[1]{\norm{#1}_{\Ltom}}

\newcommand{\normLtqom}[1]{\norm{#1}_{\Ltqom}}

\newcommand{\normLtqpoom}[1]{\norm{#1}_{\Ltqpoom}}

\newcommand{\normLtqmoom}[1]{\norm{#1}_{\Ltqmoom}}

\newtheorem{lem}{Lemma}
\newtheorem{defi}[lem]{Definition}
\newtheorem{theo}[lem]{Theorem}
\newtheorem{cor}[lem]{Corollary}
\newtheorem{rem}[lem]{Remark}

\renewcommand{\TP}{\T}     
\renewcommand{\Tskew}{A} 
\newcommand{\dopo}{}


\begin{document}

\maketitle{}

\begin{center}
\tt Dedicated to Rolf Leis on the occasion of his 80th birthday
\end{center}

\vspace*{0mm}
\thispagestyle{empty}

\begin{abstract}
For a bounded domain $\om\subset\rt$ with Lipschitz boundary $\ga$
and some relatively open Lipschitz subset $\gat\neq\emptyset$ of $\ga$,
we prove the existence of some $c>0$, such that 
\begin{align}
\mylabel{mki}
c\norm{\TP}_{\Lt(\om,\rttt)}
\leq\norm{\sym\TP}_{\Lt(\om,\rttt)}
+\norm{\Curl\TP}_{\Lt(\om,\rttt)}
\end{align}
holds for all tensor fields in $\HCom$, i.e.,
for all square-integrable tensor fields $\TP:\om\to\rttt$
with square-integrable generalized rotation $\Curl\TP:\om\to\rttt$,
having vanishing restricted tangential trace on $\gat$.
If $\gat=\emptyset$, \eqref{mki} still holds at least for simply connected $\om$
and for all tensor fields $\TP\in\HCom$
which are $\Ltom$-perpendicular to $\so(3)$, i.e.,
to all skew-symmetric constant tensors.
Here, both operations, $\Curl$ and tangential trace,
are to be understood row-wise.

For compatible tensor fields $\TP=\na v$,
\eqref{mki} reduces to a non-standard variant
of the well known Korn's first inequality in $\rt$, namely
$$c\norm{\na v}_{\Lt(\om,\rttt)}
\leq\norm{\sym\na v}_{\Lt(\om,\rttt)}$$
for all vector fields $v\in\Ho(\om,\rt)$,
for which $\na v_{n}$, $n=1,\dots,3$, are normal at $\gat$.
On the other hand, identifying vector fields $v\in\Ho(\om,\rt)$ 
(having the proper boundary conditions)
with skew-symmetric tensor fields $\TP$,
\eqref{mki} turns to Poincar\'e's inequality since
$$\sqrt{2}c\norm{v}_{\Lt(\om,\rt)}
=c\norm{\TP}_{\Lt(\om,\rttt)}
\leq\norm{\Curl\TP}_{\Lt(\om,\rttt)}
\leq2\norm{\na v}_{\Lt(\om,\rt)}.$$
Therefore, \eqref{mki} may be viewed 
as a natural common generalization 
of Korn's first and Poincar\'e's inequality.
From another point of view, \eqref{mki} states
that one can omit compatibility of the tensor field $\TP$ at the expense 
of measuring the deviation from compatibility through $\Curl\TP$. 
Decisive tools for this unexpected estimate 
are the classical Korn's first inequality, 
Helmholtz decompositions for mixed boundary conditions 
and the Maxwell estimate.\\

\keywords{Korn's inequality, incompatible tensors, 
Maxwell's equations, Helmholtz decomposition,
Poincar\'e type inequalities, Friedrichs-Gaffney inequality, 
mixed boundary conditions, tangential traces}
\end{abstract}

\tableofcontents

\section{Introduction}

In this contribution we show that Korn's first inequality 
can be generalized in some not so obvious directions, 
namely to tensor fields which are not gradients. 
Our study is a continuation from
\cite{neffpaulywitschgenkornpamm,neffpaulywitschgenkornrt,
neffpaulywitschgenkornrtzap,neffpaulywitschgenkornrn}
and here we generalize our results to weaker boundary conditions 
and domains of more complicated topology.
For the proof of our main inequality \eqref{mki} 
we combine techniques from electro-magnetic and elasticity theory, namely
\begin{description}
\item[\rm\quad(HD)] Helmholtz' decomposition,
\item[\rm\quad(MI)] the Maxwell inequality,
\item[\rm\quad(KI)] Korn's inequality.
\end{description}
Since these three tools are crucial for our results
we briefly look at their history.
As pointed out in the overview \cite{sproessighelmdecooverview},
Helmholtz founded a comprehensive development
in the theory of projections methods 
mostly applied in, e.g.,
electromagnetic or elastic theory or
fluid dynamics. His famous theorem HD, see Lemma \ref{helmdeco}, states,
that any sufficiently smooth and sufficiently fast decaying vector field 
can be characterized by its rotation and divergence 
or can be decomposed into an irrotational
and a solenoidal part.
A first uniqueness result was given by Blumenthal 
in \cite{blumenthalhelmdecounique}.
Later, Hilbert and Banach space methods have been used
to prove similar and refined decompositions of the same type.

The use of inequalities is widespread in establishing existence and uniqueness
of solutions of partial differential equations. 
Furthermore, often these inequalities ensure 
that the solution is in a more suitable space 
from a numerical view point than the solution space itself.
Let $\om\subset\rt$ be a bounded domain with Lipschitz continuous boundary $\ga$.
Moreover, let $\gat,\gan$ be some relatively open Lipschitz subsets of $\ga$ 
with $\ol{\gat}\cup\ol{\gan}=\ga$ and $\gat \neq\emptyset$.
In potential theory use is made of {\bf Poincar\'e's inequality}, this is
\begin{align}
\mylabel{intropoincare}
\normLtom{u}&\leq\cp\normLtom{\na u}
\end{align}
for all functions $u\in\Hocgatom$\footnote{For 
exact definitions see section \ref{defsec}.} 
with some constant $\cp>0$\footnote{In the following 
$\cp,\ck,\cm>0$ refer to the constants
in Poincar\'e's, Korn's and in the Maxwell inequalities, respectively.},
to bound the scalar potential in terms of its gradient. 
In elasticity theory {\bf Korn's first inequality}
in combination with Poincar\'e's inequality, this is
\begin{align}
\mylabel{introkorn}
(\cp^2+1)^{-1/2}\norm{v}_{\Hoom}\leq\normLtom{\na v}&\leq\ck\normLtom{\sym\na v}
\end{align}
for all vector fields $v\in\Hocgatom$
with some constant $\ck>0$,
is needed for bounding the deformation 
of an elastic medium in terms of the symmetric strains, i.e., 
the symmetric part $\sym\na v=\frac{1}{2}(\na v+\trans{(\na v)})$ 
of the Jacobian $\na v$.
In electro-magnetic theory the
{\bf Maxwell inequality} (see Lemma \ref{poincaremax}), this is
\begin{align}
\mylabel{intromaxwell}
\normLtom{v}&\leq\cm\big(\normLtom{\curl v}+\normLtom{\div v}\big)
\end{align}
for all $v\in\Hccgatom\cap\Hdcganom\cap\harmdi^{\bot}$
with some positive $\cm$,
is used to bound the electric and magnetic field in terms of 
the electric charge and current density, respectively.
Actually, this important inequality is just the continuity estimate 
of the corresponding electro-magneto static solution operator. 
It has different names in the literature, e.g.,  
Friedrichs', Gaffney's or Poincar\'e 
type inequality \cite{Gaffney55,friedrichsdiffformsriemann}.

It is well known that Korn's and Poincar\'e's inequality are not equivalent. 
However, one main result of our paper is 
that both inequalities, i.e., \eqref{intropoincare}, \eqref{introkorn}, 
can be inferred from the more general result \eqref{mki},
where \eqref{intromaxwell} is used within the proof.

\subsection{The Maxwell inequality}

Concerning the MI (Lemma \ref{poincaremax})
in 1968 Leis \cite{leistheoem} 
considered the boundary value problem 
of total reflection for the inhomogeneous and anisotropic Maxwell system 
as well in bounded as in exterior domains. 
For bounded domains $\om\subset\rt$ he was able to estimate 
the derivatives of vector fields $v$ by the fields themselves,
their divergence and their rotation in $\Ltom$, i.e.,
\begin{align}
\mylabel{coe}
c\sum_{n=1}^3\normLtom{\p_{n}v}
&\leq\normLtom{v}+\normLtom{\curl v}+\normLtom{\div v},
\end{align}
provided that the boundary $\ga$ is sufficiently smooth 
and that $\nu\times v|_{\ga}=0$\footnote{$\nu$ denotes 
the outward unit normal at $\ga$ and $\times$ 
respectively $\cdot$ the vector respectively scalar product in $\rt$.},
i.e., the tangential trace of $v$ vanishes at $\ga$.
Of course, \eqref{coe} implies
\begin{align}
\nonumber
c\norm{v}_{\Hoom}
&\leq\normLtom{v}+\normLtom{\curl v}+\normLtom{\div v}
\end{align}
and thus by Rellich's selection theorem
the {\bf Maxwell compactness property} (MCP), i.e.,
\begin{align*}
\Xom&:=\Hccom\cap\Hdom\\
&\,\,=\set{v\in\Ltom}{\curl v\in\Ltom,\,\div v\in\Ltom,\,\nu\times v|_{\ga}=0}
\end{align*}
is compactly embedded into $\Ltom$, since $\Xom$ is a closed subspace 
of the Sobolev-Hilbert space $\Hoom$. 
However, \eqref{coe},
which is often called Friedrichs' or Gaffney's inequality, 
fails if smoothness of $\partial \Omega$ is not assumed.
On the other hand, by a standard indirect argument the MCP 
implies the Maxwell inequality
\eqref{intromaxwell} for $\gat=\ga$.
Hence, the compact embedding 
\begin{align}
\mylabel{compembeddintro}
\Xom\hookrightarrow\Ltom
\end{align}
is crucial for a solution theory suited for Maxwell's equations
as well as for the validity of the Maxwell estimate \eqref{intromaxwell} 
or Lemma \ref{poincaremax}.
But in the non-smooth case compactness of \eqref{compembeddintro} 
can not be proved by Rellich's selection theorem.
On the other hand, if \eqref{compembeddintro} is compact, 
one obtains Fredholm's alternative
for time-harmonic/static Maxwell equations
and the Maxwell inequality for bounded domains. 
For unbounded domains, e.g., exterior domains,
(local) compactness implies
Eidus' limiting absorption and limiting amplitude principles 
and the corresponding weighted Maxwell inequalities
\cite{eiduslabp,eiduslamp,eiduslamptwo,eidusmixproela}. 
These are the right and crucial tools for treating radiation problems,
see the papers by Pauly 
\cite{paulytimeharm,paulystatic,paulydeco,paulyasym,kuhnpaulyregmax}
for the latest results.
Therefore, Leis encouraged some of his students
to deal with electro-magnetic problems,
in particular with the MCP-question, see
\cite{picardpotential,picardboundaryelectro,picardcomimb,
picardlowfreqmax,picarddeco,picardweckwitschxmas,rinkensdiss,weckmax,witschremmax}. 

In 1969 Rinkens \cite{rinkensdiss} (see also \cite{leisbook}) 
presented an example of a non-smooth domain 
where the embedding of $\Xom$ into $\Hoom$ is not possible.  
Another example had been found shortly later 
and is written down in a paper by Saranen \cite{saranenmaxkegel}. 

Henceforth, there was a search for proofs
which do not make use of an embedding of $\Xom$ into $\Hoom$. 
In 1974 Weck \cite{weckmax} obtained a first and quite general result 
for `cone-like' regions. 
Weck considered a generalization of Maxwell's boundary value problem 
to Riemannian manifolds of arbitrary dimension $N$, 
going back to Weyl \cite{weylstrahlungsfelder}. 
The cone-like regions have Lipschitz boundaries 
but maybe not the other way round. 
However, polygonal boundaries are covered by Weck's result. 
In a joint paper by Picard, Weck and Witsch \cite{picardweckwitschxmas} 
Weck's proof has been modified 
to obtain \eqref{compembeddintro} even for domains 
which fail to have Lipschitz boundary.

Other proofs of \eqref{compembeddintro} for Lipschitz domains have been given by
Costabel \cite{costabelremmaxlip} and Weber \cite{webercompmax}. 
Costabel showed that $\Xom$ is already embedded 
into the fractional Sobolev space $\Hgen{1/2}{}{}(\om)$. 
Weber's proof has been modified by Witsch \cite{witschremmax}
to obtain the result for domains with H\"older continuous boundaries 
(with exponent $p>1/2$). 
Finally, there is a quite elegant result by Picard \cite{picardcomimb}
who showed that if the result holds for smooth boundaries
it holds for Lipschitz boundaries as well. 
This result remains true even 
in the generalized case (for Riemannian manifolds).

In this paper we shall make use of a result 
by Jochmann \cite{jochmanncompembmaxmixbc} 
who allows a Lipschitz boundary $\ga$ which is divided into two parts 
$\gat$ and $\gan$ by a Lipschitz curve 
and such that on $\gat$ and $\gan$ 
the mixed boundary conditions $\nu\times v|_{\gat}=0$ and 
$\nu\cdot v|_{\gan}=0$\, respectively, hold. 
In his dissertation, Kuhn \cite{kuhndiss} has proved 
an analogous result for the generalized Maxwell equations 
on Riemannian manifolds, following Weck's approach. 

The well known Sobolev type space $\Hcom$ has plenty of important and prominent applications,
most of them in the comprehensive theory of Maxwell's equations,
i.e., in electro-magnetic theory.
Among others, we want to mention
\cite{leistheoem,leiseindfortmax,leisbook,picardpotential,picardboundaryelectro,
picardcomimb,picardlowfreqmax,picarddeco,weckmax,wecktrace,witschremmax,webercompmax,
weberregmax,paulytimeharm,paulystatic,paulydeco,paulyasym,kuhnpaulyregmax,paulyrepinmaxst}. 
It is also used as a main tool for the analysis and discretization 
of Navier-Stokes' equations and in the numerical analysis 
of non-conforming finite element discretizations \cite{Hiptmair09,Raviart86}.

\subsection{Korn's inequality}

Korn's inequality gives the control of the 
$\Ltom$-norm of the gradient of a vector field 
by the $\Ltom$-norm of just the symmetric part of its gradient, 
under certain conditions. 
The most elementary variant of Korn's inequality for $\gat=\ga$ reads as follows: 
For any smooth vector field $v:\om\to\rt$ with compact support in $\om$,
i.e., $v\in\Cicom$,
\begin{align}
\mylabel{korntrivial}
\normLtom{\na v}^2&\leq2\normLtom{\sym\na v}^2
\intertext{holds. This inequality is simply obtained by straight forward partial integration,
see the Appendix, and dates back to Korn himself \cite{Korn09}.
Moreover, it can be improved easily by estimating just the 
deviatoric part of the symmetric gradient (see Appendix), this is}
\mylabel{korntrivialdev}
\forall\,v\in\Hocom\dopo\quad\frac{1}{2}\normLtom{\na v}^2
&\leq\normLtom{\dev\sym\na v}^2\leq\normLtom{\sym\na v}^2.
\end{align}
Here, we introduce the deviatoric part $\dev\T:=\T-\frac{1}{3}\tr\T\id$ 
as well as the symmetric and skew-symmetric parts
$\sym\T:=\frac{1}{2}(\T+\trans{\T})$, $\skew\T:=\frac{1}{2}(\T-\trans{\T})$
for quadratic matrix or tensor fields $\T$.
Note that $\T=\sym\T+\skew\T=\dev\T+\frac{1}{3}\tr\T\id$ 
and $\sym\T$, $\skew\T$ and $\dev\T$, $\tr\T\id$ are orthogonal in $\rttt$.
Together with (component-wise) 
Poincar\'e's inequality \eqref{intropoincare} for $\gat=\ga$,
see \cite{Poincare1894}, one arrives as in \eqref{introkorn} for all $v\in\Hocom$ at 
$$(\cp^2+1)^{-1/2}\norm{v}_{\Hoom}\leq\normLtom{\na v}
\leq\sqrt{2}\normLtom{\dev\sym\na v}\leq\sqrt{2}\normLtom{\sym\na v}.$$
Then, Rellich's selection theorem shows that the set of all 
$\Hocom$-vector fields whose (deviatoric) symmetric gradients 
are bounded in $\Ltom$ is (sequentially) compact in $\Ltom$.

Let us mention that Arthur Korn (1870-1945) was a student of Henri Poincar\'e. 
Korn visited him in Paris before the turn of the 20th century
and it was again Korn who wrote the obituary for Poincar\'e in 1912 \cite{Korn12}.
It is also worth mentioning that Poincar\'e helped 
to introduce Maxwell's electro-magnetic theory to french readers.
The interesting life of the german-jewish mathematician,
physicist and inventor of telegraphy Korn is recalled in
\cite{Korn_biography_german,Korn_biography_french}.

In general, Korn's inequality involves 
an integral measure of shape deformation, i.e.,
a measure of strain $\norm{\sym\na v}$, 
with which it is possible to control the distance of the deformation 
to some Euclidean motion or to control the $\Hoom$-norm or semi-norm. 

Consider the kernel of the linear operator $\sym\na:\Hoom\subset\Ltom\to\Ltom$
\begin{align}
\mylabel{rmkernsymgrad}
\ker(\sym\na)=\RM:=\set{x\mapsto\Tskew x+b}{\Tskew\in\so(3),\,b\in\rt},
\end{align}
the space of all infinitesimal rigid displacements (motions)
which consists of all affine linear transformations $v$ 
for which $\na v=\Tskew\in\so(3)$, i.e.,
$\Tskew$ is skew-symmetric\footnote{\eqref{rmkernsymgrad} easily 
follows from the simple observation that $\sym\na v=0$ 
implies $\na v(x)=\Tskew(x)\in\so(3)$.
Taking the $\Curl$ on both sides gives $\Curl\Tskew=0$ and thus $\na\Tskew=0$.
Hence, $\Tskew$ must be a constant skew-symmetric matrix.
Equivalently, one may use the well known representation for second derivatives
$\p_{i}\p_{j}v_{k}=\p_j(\sym\na v)_{ik}+\p_i(\sym\na v)_{jk}-\p_k(\sym\na v)_{ij}$.
Then, $\sym\na v=0$ implies that $v$ is a first order polynomial. 
This representation formula for second derivatives of $v$ 
in terms of derivatives of strain components 
can also serve as basis for a proof 
of Korn's second inequality \cite{Ciarlet10,Duvaut76}. 
In this case one uses the lemma of Lions, see \cite{Ciarlet98a}, i.e.,
for a Lipschitz domain $u\in\Ltom$ if and only if 
$u\in\Hgen{-1}{}{}(\om)$ and $\na u\in\Hgen{-1}{}{}(\om)$.}.
Since the measure of strain $\sym\na v$ is invariant 
with respect to superposed infinitesimal rigid displacements,
i.e., $\RM\subset\ker(\sym\na)$,
one needs some linear boundary or normalization conditions 
in order to fix this Euclidean motion. 
E.g., using homogeneous Dirichlet boundary conditions 
one has \eqref{introkorn}.
By normalization one gets
\begin{align}
\mylabel{kornwithoutbcone}
\normLtom{\na v}
&\leq\ck\normLtom{\sym\na v}
\end{align}
for all $v\in\Hoom$ 
with $\na v\bot\so(3)$\footnote{$\bot$ denotes 
orthogonality in $\Ltom$, whose elements map 
into $\rz$, $\rt$ or $\rttt$, respectively.}.
Equivalently, one has for all $v\in\Hoom$, e.g.,
\begin{align}
\mylabel{kornwithoutbctwo}
\normLtom{\na v-\Tskew_{\na v}}&\leq\ck\normLtom{\sym\na v},
\end{align}
where the constant skew-symmetric tensor
$$\Tskew_{\na v}:=\skew\oint_{\om}\na v\dl\in\so(3),\quad
\oint_{\om}u\dl:=\lambda(\om)^{-1}\int_{\om}u\dl\quad
\text{($\lambda$: Lebesgue's measure)},$$
is the $\Ltom$-orthogonal projection of $\na v$
onto $\so(3)$.
For details we refer to the appendix.
Poincar\'e's inequalities for vector fields by normalization read
\begin{align}
\mylabel{poincarewithoutbcone}
\normLtom{v}
&\leq\cp\normLtom{\na v},&
\norm{v}_{\Hoom}
&\leq(1+\cp^2)^{1/2}\normLtom{\na v}
\intertext{for all $v\in\Hoom$ with $v\bot\,\rt$.
Equivalently, one has for all $v\in\Hoom$, e.g.,}
\mylabel{poincarewithoutbctwo}
\normLtom{v-a_{v}}
&\leq\cp\normLtom{\na v},&
\norm{v-a_{v}}_{\Hoom}
&\leq(1+\cp^2)^{1/2}\normLtom{\na v},
\end{align}
where the constant vector
$$a_{v}:=\oint_{\om}v\dl\in\rt$$ 
is the $\Ltom$-orthogonal projection of $v$ onto $\rt$.
Combining \eqref{kornwithoutbcone} and \eqref{poincarewithoutbcone} we obtain
\begin{align}
\mylabel{kornwithpoincarewithoutbcone}
(1+\cp^2)^{-1/2}\norm{v}_{\Hoom}\leq\normLtom{\na v}&\leq\ck\normLtom{\sym\na v}
\intertext{for all $v\in\Hoom$ with $\na v\bot\so(3)$ and $v\bot\,\rt$.
Without these conditions one has}
\mylabel{kornwithpoincarewithoutbctwo}
(1+\cp^2)^{-1/2}\norm{v-r_{v}}_{\Hoom}
\leq\normLtom{\na v-\Tskew_{\na v}}
&\leq\ck\normLtom{\sym\na v},
\end{align}
for all $v\in\Hoom$, where the rigid motion 
$r_{v}:=\Tskew_{\na v}\xi+a_{v}-\Tskew_{\na v}a_{\xi}\in\RM$ 
with the identity function $\xi(x):=\id(x)=x$ reads
$$r_{v}(x):=\Tskew_{\na v}x+\oint_{\om}v\dl-\Tskew_{\na v}\oint_{\om}x\dl_{x}.$$
Note that $u:=v-r_{v}$ belongs to $\Hoom$ with $\na u=\na v-\Tskew_{\na v}$
and satisfies $\na u\bot\so(3)$ and $u\bot\,\rt$.
Hence \eqref{kornwithpoincarewithoutbcone} holds for $u$.
Moreover, we have for $v\in\Hoom$
$$r_{v}=0\qequi\Tskew_{\na v}=0\,\wedge\,a_{v}=0
\qequi\na v\bot\so(3)\,\wedge\,v\bot\,\rt.$$
See the appendix for details.
Conditions to eliminate some or all six rigid body modes 
(three infinitesimal rotations and three translations) 
comprise (see \cite{Alessandrini08})
$$\skew\int_{\om}\na v\dl=0,\quad v|_{\gat}=0,\quad
\na v_{n}\text{ normal to }\gat.$$

Korn's inequality is the main tool in showing existence, 
uniqueness and continuous dependence upon data 
in linearized elasticity theory 
and it has therefore plenty of applications 
in continuum mechanics \cite{Oleinik92,Horgan95}. 
One refers usually to \cite{Korn06,Korn08,Korn09} 
for first versions of Korn's inequalities. 
These original papers by Korn are, however,
difficult to read nowadays and Friedrichs even claims 
that they are wrong \cite{Friedrichs47}.
In any case, in \cite[p.710(13)]{Korn09} Korn states 
that (in modern notation)
$$\normLtom{\skew\na v}\leq\ck\normLtom{\sym\na v}$$
holds for all vector fields $v:\om\subset\rt\to\rt$ 
having H\"older continuous first order derivatives 
and which satisfy
$$\int_{\om}v\dl=0,\quad\int_{\om}\skew\na v\dl=0.$$
Note that this implies $\Tskew_{\na v}=0$ and $a_{v}=0$
and hence $r_{v}=0$.

Let $\gat\neq\emptyset$. By the classical {\bf Korn's first inequality}
with homogeneous Dirichlet boundary condition we mean
\begin{align*}
\exists\,\ck&>0&\forall\,v&\in\Hocgatom&
\normLtom{\na v}&\leq\ck\normLtom{\sym\na v}
\intertext{or equivalently by Poincar\'e's inequality \eqref{intropoincare}}
\exists\,\ck&>0&\forall\,v&\in\Hocgatom&
\norm{v}_{\Hoom}&\leq\ck\normLtom{\sym\na v},
\intertext{see \eqref{introkorn},
whereas we say that the classical {\bf Korn's second inequality} holds if}
\exists\,\ck&>0&\forall\,v&\in\Hoom&
\norm{v}_{\Hoom}&\leq\ck\big(\normLtom{v}+\normLtom{\sym\na v}\big).
\end{align*}

Korn's first inequality can be obtained as a consequence 
of Korn's second inequality\footnote{The ascription 
Korn's first or second inequality is not universal. 
Friedrichs \cite{Friedrichs47} refers to Korn's inequality 
$\normLtom{\na v}\leq\ck\normLtom{\sym\na v}$
in the first case if $u|_{\ga}=0$ 
and to the second case if $\skew\int_{\om}\na v\dl$ vanishes.
We follow the usage in \cite[p.54]{Valent88}.} 
and the compactness of the embedding $\Hoom\hookrightarrow\Ltom$, i.e.,
Rellich's selection theorem for $\Hoom$. 
Thus, the main task for Korn's inequalities is to show Korn's second inequality. 
Korn's second inequality in turn can be seen as a strengthened version 
of G\r{a}rding's inequality requiring methods 
from Fourier analysis \cite{Necas68a,Necas68b,Kato79}.
Very elegant and short proofs of Korn's second inequality 
have been presented in \cite{Oleinik89,Tiero99}
and by Fichera \cite{Fichera72}. 
Fichera's proof can be found in the appendix of Leis' book \cite{leisbook}.
Another short proof is based on strain preserving extension operators \cite{Nitsche81}.

Both inequalities admit a natural extension to the Sobolev space 
$\Woom{p}$ for Sobolev exponents $1<p<\infty$.
The first proofs have been given by Mosolov and Mjasnikov 
in \cite{Mosolov71,Mosolov72} and by Ting in \cite{Ting72}.
Note that Korn's inequalities are wrong\footnote{Korn's inequalities 
are also wrong in $\Woom{\infty}$. 
E.g., consider the unit ball in $\rz^2$ 
and the vector field $v(x):=\ln|x|(x_{2},-x_{1})$.}
in $\Woom{1}$, see \cite{Ornstein62}. 
New and simple counterexamples for $\Woom{1}$ have been obtained in \cite{Conti05a}.
Friedrichs furnished the first\footnote{The case $N=2$ 
has already been proved by Friedrichs \cite{Friedrichs37} in 1937.} 
modern proof of the above inequalities \cite{Friedrichs47}, see also
\cite{Payne61,Gobert62,Friedrichs47,Necas68a,Necas68b,Besov67,
Nitsche81,Oleinik89,Ciarlet10,Ciarlet10b,Horgan75,Horgan83,Duvaut76}.  
A version of Korn's inequality for sequences of gradient young measures
has been obtained in \cite{Bhattacharya91}. 

Korn's inequalities are also crucial in the finite element treatment 
of problems in solid mechanics with non-conforming or discontinuous Galerkin methods.
Piecewise Korn's inequalities subordinate to the mesh 
and involving jumps across element boundaries 
are investigated, e.g., in \cite{Brenner04,Neff_BV}. 
An interesting special case of Korn's first inequality 
with non-standard boundary conditions
and for non-axi-symmetric domains with applications 
in statistical mechanics has been treated in \cite{Villani02}.

Ciarlet \cite{Ciarlet98a,Ciarlet98b,Ciarlet05} has shown 
how to extend Korn's inequalities to curvilinear coordinates 
in Euclidean space which has applications in shell theory. 
It is possible to extend such generalizations to more general 
Riemannian manifolds \cite{Jost02}. 
Korn's inequalities for thin domains with uniform constants 
have been investigated, e.g., in \cite{Lewicka10}.

Korn's inequalities appear in the treatment of the Navier-Stokes model as well,
since with the fluid velocity $v$ in the Eulerian description 
the rate of the deformation tensor is given by $\sym\na v$
which controls the viscous forces generated due to shearing motion.
In this case, Korn's inequality acts in a geometrically exact description 
of the fluid motion and not just for the approximated linearized treatment 
as in linearized elasticity.

\subsection{Further generalizations of Korn's inequalities}

\subsubsection{Poincar\'e-Korn type estimates}

As already mentioned, 
it is well known that there are no $\Woom{1}$-versions 
of Korn's inequalities \cite{Ornstein62,Conti05a}. 
However, it is still possible to obtain a bound of the 
$\Lpom$-norm of a vector field $v$ even for $p=1$ in terms of controlling 
the strain $\sym\na v$ in some sense\footnote{And this 
is indeed the type of inequality a la Poincar\'e's estimate 
that Korn intended to prove \cite[p.707]{Korn09}.}. 
More precisely, let as usual $\BDom$ denote the space of bounded deformations,
i.e., the space of all vector fields $v\in\Loom$ 
such that all components of the tensor (matrix) $\sym\na v$ 
(defined in the distributional sense) 
are measures with finite total variation. 
Then, the total variation measure of the distribution $\sym\na v$ 
for a vector field $v\in\Loom$ is defined by
$$|\sym\na v|(\om):=\sup_{\substack{\Phi\in\Cocom\\\norm{\Phi}_{\Liom}\leq1}}
|\scpom{v}{\Div\sym\Phi}|$$
and $|\sym\na v|(\om)=\norm{\sym\na v}_{\Loom}$ holds if $\sym\na v\in\Loom$.
In \cite{Kohn79,Kohn82} the inequalities
\begin{align*}
\exists\,\ck&>0&\forall\,v&\in\BDom\dopo&
\inf_{r\in\RM}\norm{v-r}_{\Loom}&\leq\ck|\sym\na v|(\om),\\
\exists\,\ck&>0&\forall\,v&\in\Lpom,\sym\na v\in\Lqom\dopo&
\inf_{r\in\RM}\norm{v-r}_{\Lpom}&\leq\ck\norm{\sym\na v}_{\Lqom}
\end{align*}
with
$$q\in[1,\infty)\setminus\{3\},\quad
p=\begin{cases}\frac{3q}{3-q}&\text{, }1\leq q<3\\\infty&\text{, }q>3\end{cases}$$
have been proved.
In case the displacement $v$ has vanishing trace on $\ga$ 
one has a Poincar\'e-Korn type inequality for $v\in\BDom$ \cite{Temam83} 
$$\norm{v}_{\Lgen{3/2}{}(\om)}\leq\ck|\sym\na v|(\om).$$
The weaker inequality with $\Loom$-term on the right hand side 
is already proved in \cite[Th.1]{Strauss71}.
Moreover, as shown in \cite[Th.II.2.4]{Temam83} 
it is clear that $\BDom$ is compactly embedded 
into $\Lpom$ for any $1\leq p<3/2$.

Considering Korn's second inequality one obtains,
again via Rellich's selection theorem,
the compact embedding of
$$\Som:=\set{v\in\Ltom}{\sym\na v\in\Ltom}$$
into $\Ltom$ provided that the `regularity result' $\Som\subset\Hoom$ holds, 
as already mentioned. 
In less regular domains, e.g., domains with cusps, 
Korn's second inequality and the embedding $\Som\subset\Hoom$ may fail, 
for counterexamples see \cite{Weck94,Geymonat98}. 
Weck \cite{Weck94} has shown that, however, 
compact embedding into $\Ltom$, i.e.,
the {\bf elastic compactness property} (ECP)\footnote{Here, 
we have the same situation as in the Maxwell case, 
see the MCP and the MI.}, 
this is, the embedding 
\begin{align}
\mylabel{compembWeck}
\Som\hookrightarrow\Ltom
\end{align}
is compact, still holds true, without 
the intermediate $\Hoom$-estimate.
Therefore, once more by a usual indirect argument, 
also in irregular (bounded) domains one has always the estimate
$$\normLtom{v}\leq\ck\normLtom{\sym\na v}$$
for all $v\in\Som\cap\Szom^{\bot}$, 
where $\Szom:=\set{v\in\Som}{\sym\na v=0}$.
Note that $\Szom$ is finite dimensional\footnote{Compare 
with $\harmdi$ in \eqref{dirichletvecdef}.} 
due to the compact embedding \eqref{compembWeck}.
Moreover, we have $\RM\subset\Szom$ but equality is not clear.

Extensions of Korn's inequalities to non-smooth domains 
and weighted versions for unbounded domains can be found in 
\cite{Oleinik88,Nazarov97,Nazarov06,Duran04,Duran06,Duran06b}. 
Korn's inequalities in Orlicz spaces are treated, e.g., 
in \cite{Fuchs10,Diening11}. A reference for Korn's inequality 
for perforated domains and homogenization theory is \cite{Nazarov09}.

\subsubsection{Generalization to weaker strain measures}

Also the second Korn's inequality can be generalized 
by using the trace free infinitesimal deviatoric strain measure. 
It holds 
$$\norm{v}_{\Hoom}\leq\ck\big(\normLtom{v}+\normLtom{\dev\sym\na v}\big)$$
for all $v\in\Hoom$. For proofs see 
\cite{Dain06,Neff_JeongMMS08,Reshetnyak70,Reshetnyak94} 
and \cite{Fuchs10,Fuchs10b,Fuchs10c}.
This version has found applications for Cosserat models 
and perfect plasticity \cite{Fuchs09}. 

Another generalization concerns the situation, 
where a dislocation based motivated generalized strain
$\sym(\na vF_p^{-1})$ is controlled. 
Such cases arise naturally 
when considering finite elasto-plasticity 
based on the multiplicative decomposition 
$F=F_eF_p$ of the deformation gradient 
into elastic and plastic parts \cite{Neff01c,Neff01d} 
or in elasticity problems with structural changes
\cite{Klawonn_Neff_Rheinbach_Vanis09,Neff_micromorphic_rse_05} 
and shell models \cite{Neff_plate05_poly}. 
In case of plasticity, $F_p:\om\to\rttt$ 
is the plastic deformation related to pure dislocation motion. 
The first result under the assumptions that $\det F_p\geq\mu>0$
and $F_{p}$ is sufficiently smooth, i.e., 
$F_{p},F_{p}^{-1},\Curl F_p\in\Co(\omb)$,
has been given by Neff in \cite{Neff00b}. 
In fact
\begin{align}
\mylabel{korn_neff}
\norm{v}_{\Hoom}\leq\ck\normLtom{\sym(\na vF_{p}^{-1})}
\end{align}
holds for all $v\in\Hocgatom$ with $\ck$ depending on $F_{p}$.
This inequality has been generalized 
to mere continuity and invertibility of $F_p$ in \cite{pompekorn},
while it is also known that some sort of smoothness 
of $F_p$ beyond $\Liom$-control is necessary, 
see \cite{pompekorn,Pompe10,Neff_Pompe13}.

\subsubsection{Korn's inequality and rigidity estimates}

Recently, there has been a revived interest in so called rigidity results, 
which have a close connection to Korn's inequalities. 
With the point-wise representation
\begin{align*}
\dist^2(\na v(x),\so(3))
&=\inf_{\Tskew\in\so(3)}|\na v(x)-\Tskew|^2\\
&=\inf_{\Tskew\in\so(3)}\left(|\sym\na v(x)|^2
+|\skew\na v(x)-\Tskew|^2\right)
=|\sym\na v(x)|^2,
\end{align*}
the infinitesimal rigidity result can be expressed as follows
\begin{align}
\mylabel{infinitesimal_rigidity}
\dist(\na v,\so(3))=0\qimpl v\in\RM.
\end{align}
Korn's first inequality can be seen as a qualitative extension 
of the infinitesimal rigidity result, this is,
for $1<p<\infty$ there exist constants $\ck>0$ such that 
$$\min_{\Tskew\in\so(3)}\norm{\na v-\Tskew}_{\Lpom}
\leq\ck\norm{\sym\na v}_{\Lpom}
=\ck\big(\int_{\om}\dist^p(\na v,\so(3))\dl\big)^{1/p}$$
holds for all $v\in\Woom{p}$, see, e.g., \cite{Valent88}.
As already seen in \eqref{kornwithoutbctwo},
in the Hilbert space case $p=2$ 
the latter inequality can be made explicit with $\Tskew=\Tskew_{\na v}$
and in this form with $\Tskew_{\na v}=0$ it is given
by Friedrichs \cite[p.446]{Friedrichs47} 
and denoted as Korn's inequality in the second case.

The nonlinear version of \eqref{infinitesimal_rigidity} 
is the classical Liouville rigidity result, 
see \cite{Ciarlet88,Reshetnyak67,Reshetnyak94}.
It states that if an elastic body is deformed in such a way 
that its deformation gradient is point-wise a rotation, 
then the body is indeed subject to a rigid motion. 
In mathematical terms we have for smooth maps $\varphi$,
that if $\na\varphi\in\SO(3)$\footnote{$\SO(3)$ 
denotes the space of orthogonal matrices with determinant $1$.} 
almost everywhere then $\na\varphi$ is constant, i.e.,
\begin{align}
\mylabel{rigidity_ciarlet}
\dist(\na\varphi,\SO(3))=0\qimpl\varphi(x)=R x+b,\quad R\in\SO(3),\,b\in\rt.
\end{align}
The optimal quantitative version of Liouville's rigidity result 
has been derived by Friesecke, James and M\"uller in \cite{Mueller02}. 
We have
\begin{align}
\mylabel{rigidity_friesecke}
\min_{R\in\SO(3)}\big(\int_{\om}\dist^p(\na\varphi,R)\dl\big)^{1/p}
&\leq\ck\big(\int_{\om}\dist^p(\na\varphi,\SO(3))\dl\big)^{1/p}.
\intertext{As a consequence, if the deformation gradient is close to rotations,
then it is in fact close to a unique rotation. 
A generalization to fracturing materials is stated in \cite{Chambolle07}. 
It is possible to infer a nonlinear Korn's inequality 
from \eqref{rigidity_friesecke}, i.e.,}
\nonumber
\normLtom{\na\varphi-\id}
&\leq\ck\normLtom{\trans{(\na\varphi)}\na\varphi-\id}
\intertext{for all $\varphi\in\Woom{4}$ with $\varphi=\id$ on $\ga$ and $\det\na\varphi>0$,
see \cite{Mardare11} for more general statements. 
Another quantitative generalization of Liouville's rigidity result
is the following: 
For all differentiable orthogonal tensor fields $R:\om\to\SO(3)$}
\mylabel{curlest}
|\na R|&\leq c\,|\Curl R|
\intertext{holds point-wise \cite{Neff_curl06}.
From \eqref{curlest} we may also recover 
\eqref{rigidity_ciarlet} by assuming $R=\na\varphi$.
It extends the simple inequality for differentiable skew-symmetric 
tensor fields $A:\om\to\so(3)$}
\mylabel{trivcurlest}
|\na A|&\leq c\,|\Curl A|
\end{align}
to $\SO(3)$, i.e., 
to finite rotations \cite{Neff_curl06}.

After this introductory remarks
we turn to the main part of our contribution.

\section{Definitions and preliminaries}\mylabel{defsec}

Let $\om$ be a bounded domain in $\rt$
with Lipschitz boundary $\ga:=\dom$.
Moreover, let $\gat$ be a relatively open subset of $\ga$ 
separated from $\gan:=\dom\setminus\ol{\gat}$
by a Lipschitz curve. 
For details and exact definitions see \cite{jochmanncompembmaxmixbc}.

\subsection{Functions and vector fields}

The usual Lebesgue spaces of square integrable functions, 
vector or tensor fields on $\om$ 
with values in $\rz$, $\rt$ or $\rttt$, respectively,
will be denoted by $\Ltom$.
Moreover, we introduce the standard Sobolev spaces
\begin{align*}
\Hgom&=\set{u\in\Ltom}{\grad u\in\Ltom},&\grad&=\na,\\
\Hcom&=\set{v\in\Ltom}{\curl v\in\Ltom},&\curl&=\na\times,\\
\Hdom&=\set{v\in\Ltom}{\div v\in\Ltom},&\div&=\na\cdot
\end{align*}
of functions $u$ or vector fields $v$, respectively.
$\Hgom$ is usually denoted by $\Hgen{1}{}{}(\om)$.
Furthermore, we introduce their closed subspaces 
$$\Hgcgatom=\Hocgatom,\quad\Hccgatom,\quad\Hdcganom$$ 
as completion under the respective graph norms of
the scalar valued space $\Cicgatom$ 
and the vector valued spaces $\Cicgatom$, $\Cicganom$, where
$$\Cic(\gamma;\om):=\set{u\in\Ci(\ol{\om})}{\dist(\supp u,\gamma)>0},\quad
\gamma\in\{\ga,\gat,\gan\}.$$
In the latter Sobolev spaces, by Gau{\ss}' theorem
the usual homogeneous scalar, tangential and normal boundary conditions
$$u|_{\gat}=0,\quad\nu\times v|_{\gat}=0,\quad\nu\cdot v|_{\gan}=0$$
are generalized, where $\nu$ denotes 
the outward unit normal at $\ga$.\footnote{Note that 
$\nu\times v|_{\gat}=0$ is equivalent to $\tau\cdot v|_{\gat}=0$
for all tangential vector fields $\tau$ at $\gat$.}
If $\gat=\ga$ (and $\gan=\emptyset$) resp.
$\gat=\emptyset$ (and $\gan=\ga$)
we obtain the usual Sobolev-type spaces and write
$$\Hgcom=\Hocom,\quad\Hccom,\quad\Hdom$$ 
resp.
$$\Hgom=\Hoom,\quad\Hcom,\quad\Hdcom.$$ 
Furthermore, we need the spaces of irrotational or solenoidal vector fields
\begin{align*}
\Hczom&:=\set{v\in\Hcom}{\curl v=0},\\
\Hdzom&:=\set{v\in\Hdom}{\div v=0},\\
\Hcczgatom&:=\set{v\in\Hccgatom}{\curl v=0},\\
\Hdczganom&:=\set{v\in\Hdcganom}{\div v=0},
\end{align*}
where the index $0$ indicates vanishing $\curl$ or $\div$, respectively.
All these spaces are Hilbert spaces. 
In classical terms, e.g., a vector field $v$ belongs to 
$\Hcczgatom$ resp. $\Hdczganom$, if
$$\curl v=0,\quad\nu\times v|_{\gat}=0\qquad\text{resp.}\qquad
\div v=0,\quad\nu\cdot v|_{\gan}=0.$$

In \cite{jochmanncompembmaxmixbc} the crucial compact embedding
\begin{align}
\mylabel{maxc}
\Hccgatom\cap\Hdcganom\hookrightarrow\Ltom
\end{align}
has been proved, which we refer to as Maxwell compactness property (MCP).
The generalization to $\rN$ or even to Riemannian manifolds
using the calculus of differential forms can be found in \cite{kuhndiss}
or \cite{jakabmitreairinamariusfinensolhodgedeco}.

A first immediate consequence of \eqref{maxc} is that the space
of so called `harmonic Dirichlet-Neumann fields'
\begin{align}
\mylabel{dirichletvecdef}
\harmdi:=\Hcczgatom\cap\Hdczganom
\end{align}
is finite dimensional, 
since by \eqref{maxc} the unit ball is compact in $\harmdi$. 
In fact, if $\gat=\emptyset$ resp. $\gat=\ga$, 
its dimension equals the first resp. second 
Betti number of $\om$, see \cite{picardboundaryelectro}.
In classical terms we have $v\in\harmdi$ if
$$\curl v=0,\quad\div v=0,\quad
\nu\times v|_{\gat}=0,\quad\nu\cdot v|_{\gan}=0.$$

By an usual indirect argument
we achieve another immediate and important consequence:

\begin{lem}
\mylabel{poincaremax}
{\sf(Maxwell Estimate for Vector Fields)}
There exists a positive constant $\cm$, 
such that for all $v\in\Hccgatom\cap\Hdcganom\cap\harmdi^{\bot}$
$$\normLtom{v}\leq 
\cm\big(\normLtom{\curl v}^2+\normLtom{\div v}^2\big)^{1/2}.$$
\end{lem}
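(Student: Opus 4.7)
The plan is a standard indirect (compactness) argument, using the Maxwell compactness property (MCP) \eqref{maxc} as the only non-trivial ingredient. Suppose, for a contradiction, that no such $\cm>0$ exists. Then for every $n\in\mathbb{N}$ one can choose a vector field $v_{n}\in\Hccgatom\cap\Hdcganom\cap\harmdi^{\bot}$ with $\normLtom{v_{n}}=1$ and $\normLtom{\curl v_{n}}^{2}+\normLtom{\div v_{n}}^{2}<1/n^{2}$.

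The first step is to observe that $(v_{n})$ is bounded in the graph norm of $\Hccgatom\cap\Hdcganom$: the $\Ltom$-norm is fixed at $1$ and $\curl v_{n}$, $\div v_{n}$ tend to zero in $\Ltom$. By the MCP \eqref{maxc} the embedding into $\Ltom$ is compact, so a subsequence (which I still denote by $(v_{n})$) converges in $\Ltom$ to some limit $v\in\Ltom$ with $\normLtom{v}=1$.

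Next, I pass to the limit in the defining relations. The subsequence $(v_{n})$ is Cauchy in $\Ltom$, and since $\curl v_{n}\to 0$ and $\div v_{n}\to 0$ in $\Ltom$, the sequence is actually Cauchy in the full graph norm of $\Hccgatom\cap\Hdcganom$. Because both $\Hccgatom$ and $\Hdcganom$ are closed subspaces of their respective graph-norm Hilbert spaces, the limit $v$ belongs to $\Hccgatom\cap\Hdcganom$ and satisfies $\curl v=0$ and $\div v=0$; in other words $v\in\Hcczgatom\cap\Hdczganom=\harmdi$. Finally, $\Ltom$-orthogonality to the finite-dimensional space $\harmdi$ is preserved under $\Ltom$-convergence, so $v\in\harmdi^{\bot}$. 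Combining $v\in\harmdi$ with $v\in\harmdi^{\bot}$ forces $v=0$, contradicting $\normLtom{v}=1$.

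The only delicate point is ensuring that the mixed tangential/normal boundary conditions encoded in $\Hccgatom$ and $\Hdcganom$ survive the limiting process; this is exactly why closedness of these spaces in the graph topology is invoked and why the argument hinges on the compact embedding \eqref{maxc} from \cite{jochmanncompembmaxmixbc}. Everything else — boundedness, extraction of a subsequence, preservation of the orthogonality condition, and the conclusion — is routine.
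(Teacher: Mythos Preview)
Your argument is correct and matches the paper's approach exactly: the paper does not spell out a proof but merely states that the lemma follows ``by an usual indirect argument'' from the Maxwell compactness property \eqref{maxc}, which is precisely the contradiction-via-compactness argument you have written out. Nothing is missing.
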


There are two options to get estimate on $\Hccgatom\cap\Hdcganom$.

\begin{cor}
\mylabel{poincaremaxcor}
{\sf(Maxwell Estimate for Vector Fields)}
There exists a positive constant $\cm$, 
such that for all $v\in\Hccgatom\cap\Hdcganom$
\begin{align*}
\normLtom{(\id-\pi)v}
&\leq\cm\big(\normLtom{\curl v}^2+\normLtom{\div v}^2\big)^{1/2},\\
\normLtom{v}
&\leq\cm\big(\normLtom{\curl v}^2+\normLtom{\div v}^2+\normLtom{\pi v}^2\big)^{1/2}.
\end{align*}
Here $\pi:\Ltom\to\harmdi$ denotes the $\Ltom$-orthogonal projection 
onto Dirichlet-Neumann fields and can be expressed explicitly by
$$\pi v:=\sum_{\ell=1}^{L}\scpLtom{v}{d^{\ell}}d^{\ell},\quad
\normLtom{\pi v}^2=\sum_{\ell=1}^{L}|\scpLtom{v}{d^{\ell}}|^2,$$
where $L:=\dim\harmdi$ and $(d^{\ell})_{\ell=1}^{L}$ an $\Ltom$-orthonormal basis of $\harmdi$.
\end{cor}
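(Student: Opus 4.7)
The plan is to reduce both assertions to Lemma~\ref{poincaremax} by subtracting off the harmonic Dirichlet--Neumann component. Given any $v\in\Hccgatom\cap\Hdcganom$, I would set $w:=(\id-\pi)v=v-\pi v$. Since $\pi v\in\harmdi=\Hcczgatom\cap\Hdczganom$, both $\curl(\pi v)=0$ and $\div(\pi v)=0$, so $w$ still lies in $\Hccgatom\cap\Hdcganom$ with $\curl w=\curl v$ and $\div w=\div v$. Moreover, by construction of the orthogonal projection, $w\bot\harmdi$. Hence $w$ satisfies the hypothesis of Lemma~\ref{poincaremax}, which immediately yields
\begin{align*}
\normLtom{(\id-\pi)v}=\normLtom{w}
&\leq\cm\bigl(\normLtom{\curl w}^2+\normLtom{\div w}^2\bigr)^{1/2}\\
&=\cm\bigl(\normLtom{\curl v}^2+\normLtom{\div v}^2\bigr)^{1/2},
\end{align*}
which is the first estimate.

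For the second estimate I would invoke the $\Ltom$-orthogonal decomposition $v=(\id-\pi)v+\pi v$ with $(\id-\pi)v\bot\pi v$, so that by Pythagoras
$$\normLtom{v}^2=\normLtom{(\id-\pi)v}^2+\normLtom{\pi v}^2.$$
Substituting the first bound on the right-hand side gives
$$\normLtom{v}^2\leq\cm^2\bigl(\normLtom{\curl v}^2+\normLtom{\div v}^2\bigr)+\normLtom{\pi v}^2,$$
from which the claim follows (after possibly enlarging $\cm$ to absorb the constant $1$ in front of $\normLtom{\pi v}^2$, or by reading the stated inequality with the understanding that $\cm\geq1$).

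The explicit representation of $\pi$ is a direct consequence of the fact that $\harmdi$ is finite dimensional, which was already recorded in the discussion following~\eqref{dirichletvecdef} as a consequence of the Maxwell compactness property~\eqref{maxc}. Once an $\Ltom$-orthonormal basis $(d^{\ell})_{\ell=1}^{L}$ of $\harmdi$ is fixed, the orthogonal projection onto a finite-dimensional subspace has the stated form, and Parseval's identity on $\harmdi$ gives the formula for $\normLtom{\pi v}^2$.

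There is no real obstacle here; the only point worth checking carefully is that $\pi v$ indeed lies in $\Hcczgatom\cap\Hdczganom$ so that subtracting it preserves membership in $\Hccgatom\cap\Hdcganom$ and does not alter $\curl v$ or $\div v$. This is automatic because $\harmdi$ is defined precisely as this intersection, so the whole argument is a short and essentially algebraic corollary of Lemma~\ref{poincaremax} combined with the orthogonal projection formalism.
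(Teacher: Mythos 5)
Your argument is correct and is precisely the intended one: the paper leaves the corollary without proof as an immediate consequence of Lemma~\ref{poincaremax}, and your reduction (apply the lemma to $w=(\id-\pi)v$, then use $\Ltom$-orthogonality of the decomposition $v=(\id-\pi)v+\pi v$ for the second estimate) is exactly that argument. You also correctly note the only small point that needs care, namely replacing $\cm$ by $\max\{\cm,1\}$ so that a single constant serves both inequalities.
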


Here, we denote by $\bot$ the orthogonal complement in $\Ltom$.
As shown in \cite{jochmanncompembmaxmixbc} as well we have
\begin{align*}
\grad\Hgcgatom^{\bot}&=\Hdczganom,&
\curl\Hccganom^{\bot}&=\Hcczgatom,
\intertext{which implies}
\ol{\grad\Hgcgatom}&=\Hdczganom^{\bot},&
\ol{\curl\Hccganom}&=\Hcczgatom^{\bot},
\intertext{where the closures are taken in $\Ltom$. Since} 
\grad\Hgcgatom&\subset\Hcczgatom,&
\curl\Hccganom&\subset\Hdczganom
\end{align*}
we obtain by the projection theorem the Helmholtz decompositions
\begin{align*}
\Ltom
&=\ol{\grad\Hgcgatom}\oplus\Hdczganom\\
&=\Hcczgatom\oplus\ol{\curl\Hccganom}\\
&=\ol{\grad\Hgcgatom}\oplus\harmdi\oplus\ol{\curl\Hccganom},
\end{align*}
where $\oplus$ denotes the $\Ltom$-orthogonal sum.
Using an indirect argument, the space $\grad\Hgcgatom$ is already closed 
by variants of Poincar\'e's estimate, i.e.,
\begin{align}
\mylabel{poincareestcl}
\gat&\neq\emptyset:&
\exists\,\cp&>0&
\forall\,u&\in\Hgcgatom&
\normLtom{u}&\leq\cp\normLtom{\grad u},\\
\gat&=\emptyset:&
\exists\,\cp&>0&
\forall\,u&\in\Hgom\cap\{1\}^{\bot}&
\normLtom{u}&\leq\cp\normLtom{\grad u},\nonumber
\end{align}
which are implied by the compact embeddings (Rellich's selection theorems)
\begin{align}
\mylabel{rellichc}
\Hgcgatom\hookrightarrow\Ltom,\quad
\Hgom\hookrightarrow\Ltom.
\end{align}
Analogously to Corollary \ref{poincaremaxcor} we also have 
for $\gat=\emptyset$ and all $u\in\Hgom$
\begin{align*}
\normLtom{u-\alpha_{u}}
&\leq\cp\normLtom{\grad u},\quad
\alpha_{u}:=\lambda(\om)^{-1}\scpLtom{u}{1}=\oint_{\om}u\dl\in\rz,\\
\normLtom{u}&
\leq\cp(\normLtom{\grad u}^2+\normLtom{\alpha_{u}}^2)^{1/2}.
\end{align*}
Interchanging $\gat$ and $\gan$ in the second equation
of the latter Helmholtz decompositions 
and applying this Helmholtz decompositions to $\Hccganom$
yields the refinement
$$\curl\Hccganom=\curl\big(\Hccganom\cap\ol{\curl\Hccgatom}\big).$$
Now, by Lemma \ref{poincaremax} we see that
$\curl\Hccganom$ is closed as well.
We have:

\begin{lem}
\mylabel{helmdeco}
{\sf(Helmholtz Decompositions for Vector Fields)}
The orthogonal decompositions
\begin{align*}
\Ltom
&=\grad\Hgcgatom\oplus\Hdczganom\\
&=\Hcczgatom\oplus\curl\Hccganom\\
&=\grad\Hgcgatom\oplus\harmdi\oplus\curl\Hccganom
\intertext{hold. Moreover,}
\curl\Hccganom
&=\curl\big(\Hccganom\cap\curl\Hccgatom\big).
\end{align*} 
\end{lem}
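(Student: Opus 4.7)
The plan is to reduce the lemma to two closure assertions together with the refinement identity, since the three orthogonal decompositions with closures have already been supplied by the discussion immediately above: from $\grad\Hgcgatom^{\bot}=\Hdczganom$ and $\curl\Hccganom^{\bot}=\Hcczgatom$, the inclusions $\grad\Hgcgatom\subset\Hcczgatom$ and $\curl\Hccganom\subset\Hdczganom$, and the definition $\harmdi=\Hcczgatom\cap\Hdczganom$, the projection theorem already gives
\begin{align*}
\Ltom
&=\ol{\grad\Hgcgatom}\oplus\Hdczganom\\
&=\Hcczgatom\oplus\ol{\curl\Hccganom}\\
&=\ol{\grad\Hgcgatom}\oplus\harmdi\oplus\ol{\curl\Hccganom}.
\end{align*}
It thus remains to show that $\grad\Hgcgatom$ and $\curl\Hccganom$ are norm-closed in $\Ltom$ and to establish the refinement.

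Closedness of $\grad\Hgcgatom$ is immediate from Poincar\'e's estimate \eqref{poincareestcl}: if $\gat\neq\emptyset$, a Cauchy sequence $(\grad u_{n})$ with $u_{n}\in\Hgcgatom$ forces $(u_{n})$ itself to be Cauchy in $\Hgcgatom$ by the estimate; if $\gat=\emptyset$, one first replaces each $u_{n}$ by $u_{n}-\alpha_{u_{n}}$ (which does not alter the gradient) and then applies the corresponding estimate on $\Hgom\cap\{1\}^{\bot}$.

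For the refinement I would invoke the $\gat\leftrightarrow\gan$ swap of the second decomposition above,
\[\Ltom=\set{w\in\Hccganom}{\curl w=0}\oplus\ol{\curl\Hccgatom},\]
which is legitimate because all hypotheses on the pair $(\gat,\gan)$ are symmetric. For $v\in\Hccganom$, decomposing $v=w+z$ accordingly gives $w\in\Hccganom$ curl-free and $z=v-w\in\Hccganom\cap\ol{\curl\Hccgatom}$ with $\curl z=\curl v$; this is the non-trivial inclusion in the refinement (the reverse is obvious).

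The main expected obstacle is closedness of $\curl\Hccganom$, where Lemma \ref{poincaremax} is the decisive tool. Any $z\in\Hccganom\cap\ol{\curl\Hccgatom}$ satisfies $\div z=0$ and has vanishing normal trace on $\gat$: the $\gat\leftrightarrow\gan$ swap of $\curl\Hccganom\subset\Hdczganom$ reads $\curl\Hccgatom\subset\set{v\in\Ltom}{\div v=0,\,\nu\cdot v|_{\gat}=0}$, and this target space is $\Ltom$-closed, so $\ol{\curl\Hccgatom}$ is contained in it. Likewise the swap of $\curl\Hccganom^{\bot}=\Hcczgatom$ gives $(\curl\Hccgatom)^{\bot}=\set{w\in\Hccganom}{\curl w=0}$, so $z\in\ol{\curl\Hccgatom}$ is orthogonal in particular to the `swapped' Dirichlet-Neumann fields. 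Lemma \ref{poincaremax} in its symmetric form, valid by the symmetry of the hypotheses on $\gat,\gan$, then yields $\normLtom{z}\leq\cm\normLtom{\curl z}$. A Cauchy $(\curl z_{n})$ thus forces $(z_{n})$ to be Cauchy in $\Ltom$, and its limit $z\in\Hccganom\cap\ol{\curl\Hccgatom}$ satisfies $\curl z=\lim\curl z_{n}$. Hence $\curl\Hccganom$ is closed, and the fully symmetric argument closes $\curl\Hccgatom$ as well, which allows us to drop the remaining closure in the refinement. The real subtlety throughout is the careful bookkeeping of boundary conditions and orthogonalities under the $\gat\leftrightarrow\gan$ swap so that Lemma \ref{poincaremax} becomes applicable to the `dual' side.
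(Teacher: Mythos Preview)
Your proposal is correct and follows essentially the same route as the paper: the decompositions with closures are taken as given, closedness of the gradient range comes from Poincar\'e's estimate, the refinement is obtained by applying the $\gat\leftrightarrow\gan$-swapped second decomposition to elements of $\Hccganom$, and closedness of $\curl\Hccganom$ is then deduced from the Maxwell estimate (Lemma~\ref{poincaremax}) on the refined representatives. You are simply more explicit than the paper about verifying that the representatives $z\in\Hccganom\cap\ol{\curl\Hccgatom}$ land in the right space $\Hccganom\cap\Hdczgatom$ orthogonal to the swapped Dirichlet--Neumann fields, and about the final symmetric step needed to drop the closure on $\curl\Hccgatom$ in the refinement; the paper leaves these as one-line remarks.
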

 
\subsection{Tensor fields}

We extend our calculus to $(3\times3)$-tensor (matrix) fields.
For vector fields $v$ with components in $\Hgom$
and tensor fields $\T$ with rows in $\Hcom$ resp. $\Hdom$, i.e.,
$$v=\dvec{v_{1}}{v_{2}}{v_{3}},\quad v_{n}\in\Hgom,\quad
\T=\dvec{\trans{\T_{1}}}{\trans{\T_{2}}}{\trans{\T_{3}}},\quad
\T_{n}\in\Hcom\text{ resp. }\Hdom$$
we define
$$\Grad v:=\dvec{\trans{\grad}v_{1}}{\trans{\grad}v_{2}}{\trans{\grad}v_{3}}=J_{v},\quad
\Curl\T:=\dvec{\trans{\curl}\T_{1}}{\trans{\curl}\T_{2}}{\trans{\curl}\T_{3}},\quad
\Div\T:=\dvec{\div\T_{1}}{\div\T_{2}}{\div\T_{3}},$$ 
where $J_{v}$ denotes the Jacobian of $v$ 
and $\trans{}$ the transpose.
We note that $v$ and $\Div\T$ are vector fields, whereas
$\T$, $\Curl\T$ and $\Grad v$ are tensor fields.
The corresponding Sobolev spaces will be denoted by
$$\HGom,\quad
\HCom,\quad\HCzom,\quad
\HDom,\quad\HDzom$$
and
$$\HGcgatom,\quad
\HCcgatom,\quad\HCczgatom,\quad
\HDcganom,\quad\HDczganom.$$

Now, we present our three crucial tools to prove our main estimate.
First we have obvious consequences 
from Lemmas \ref{poincaremax} and \ref{helmdeco}:

\begin{cor}
\mylabel{poincaremaxten}
{\sf(Maxwell Estimate for Tensor Fields)}
The estimate
$$\normLtom{\T}\leq 
\cm\big(\normLtom{\Curl\T}^2+\normLtom{\Div\T}^2\big)^{1/2}$$
holds for all tensor fields $\T\in\HCcgatom\cap\HDcganom\cap(\harmdi^3)^{\bot}$.
Furthermore, the analogue of Corollary \ref{poincaremaxcor} holds as well.
\end{cor}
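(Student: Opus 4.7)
The plan is to obtain the tensor estimate as a direct, row-by-row consequence of the vector field estimate in Lemma \ref{poincaremax}. By construction the operators $\Curl$ and $\Div$ act row-wise on tensor fields, and the Sobolev spaces $\HCcgatom$, $\HDcganom$ are defined row-wise through $\Hccgatom$, $\Hdcganom$; the subspace $(\harmdi^3)^{\bot}\subset\Ltom$ consists of exactly those tensor fields each of whose three rows is $\Ltom$-perpendicular to $\harmdi$. No machinery beyond Lemma \ref{poincaremax} and this row-wise dictionary should be required.

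Concretely, I would denote the rows of $\T$ by $\T_1,\T_2,\T_3$ and observe that the hypothesis on $\T$ is equivalent to $\T_n\in\Hccgatom\cap\Hdcganom\cap\harmdi^{\bot}$ for $n=1,2,3$. Applying Lemma \ref{poincaremax} to each row yields
\[
\normLtom{\T_n}^2\leq\cm^2\bigl(\normLtom{\curl\T_n}^2+\normLtom{\div\T_n}^2\bigr),\qquad n=1,2,3,
\]
with one and the same constant $\cm$. Summing in $n$ and using the row-wise Pythagorean identities
\[
\normLtom{\T}^2=\sum_{n=1}^3\normLtom{\T_n}^2,\quad \normLtom{\Curl\T}^2=\sum_{n=1}^3\normLtom{\curl\T_n}^2,\quad \normLtom{\Div\T}^2=\sum_{n=1}^3\normLtom{\div\T_n}^2,
\]
which follow from the row-wise definitions of $\Curl$, $\Div$ and the Frobenius-type $\Ltom$-norm on tensor fields, produces the claimed inequality with the same constant $\cm$.

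For the analogue of Corollary \ref{poincaremaxcor} I would lift the scalar projection $\pi:\Ltom\to\harmdi$ to a tensor projection $\Pi:\Ltom\to\harmdi^3$ by setting the $n$-th row of $\Pi\T$ to be $\pi\T_n$; row-wise orthogonality makes this the $\Ltom$-orthogonal projection onto $\harmdi^3$, and it admits an explicit expansion in the orthonormal basis $(d^{\ell})$ of $\harmdi$ applied row-wise. Decomposing $\T=\Pi\T+(\id-\Pi)\T$ and applying the estimate just proved to the orthogonal part $(\id-\Pi)\T\in(\harmdi^3)^{\bot}$ reproduces the two estimates of Corollary \ref{poincaremaxcor} verbatim. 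There is essentially no obstacle here: the one small point to verify is that the three squared estimates add without degrading the constant, so the very same $\cm$ as in the vector case works. The corollary is thus a formal, constant-preserving lifting of the vector result to tensor fields.
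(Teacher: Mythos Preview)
Your argument is correct and is exactly what the paper intends: it states the corollary as an ``obvious consequence'' of Lemma~\ref{poincaremax} without spelling out a proof, and the row-wise verification you give (including the check that $(\harmdi^3)^{\bot}$ decomposes row-wise into $\harmdi^{\bot}$, and that summing the squared vector estimates preserves the constant $\cm$) is precisely the intended justification.
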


Here, $\T\in\harmdi^3$ if
$\trans{\T}=[\T_{1}\,\T_{2}\,\T_{3}]$ with $\T_{m}\in\harmdi$ for $m=1,\dots,3$.

\begin{cor}
\mylabel{helmdecoten}
{\sf(Helmholtz Decomposition for Tensor Fields)}
The orthogonal decompositions
\begin{align*}
\Ltom
&=\Grad\HGcgatom\oplus\HDczganom\\
&=\HCczgatom\oplus\Curl\HCcganom\\
&=\Grad\HGcgatom\oplus\harmdi^3\oplus\Curl\HCcganom
\intertext{hold. Moreover,}
\Curl\HCcganom
&=\Curl\big(\HCcganom\cap\Curl\HCcgatom\big).
\end{align*} 
\end{cor}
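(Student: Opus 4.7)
The plan is to reduce everything row-wise to the vector case, exploiting the fact that all the tensorial operators ($\Grad$, $\Curl$, $\Div$) and all the boundary conditions encoded in $\HCcgatom$, $\HDcganom$, $\HGcgatom$ are defined row-wise. Concretely, a tensor field $\T$ with rows $\T_{1},\T_{2},\T_{3}$ lies in $\HCcgatom$ (respectively $\HDcganom$, $\HGcgatom$) if and only if each $\T_{m}$ lies in $\Hccgatom$ (respectively $\Hdcganom$, $\Hgcgatom$), and under this identification the $\Ltom$-inner product on tensor fields splits as the sum of the three row-wise $\Ltom$-inner products on vector fields. In other words, $\Ltom$ for tensor fields is isomorphic as a Hilbert space to the orthogonal direct sum of three copies of $\Ltom$ for vector fields, and each relevant subspace is the direct sum of its three row-spaces.

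First I would verify this product structure explicitly for the three subspaces that appear in the statement: $\Grad\HGcgatom$ corresponds to the product of three copies of $\grad\Hgcgatom$; $\HDczganom$ corresponds to the product of three copies of $\Hdczganom$; $\HCczgatom$ corresponds to the product of three copies of $\Hcczgatom$; $\Curl\HCcganom$ corresponds to the product of three copies of $\curl\Hccganom$; and $\harmdi^{3}$ is by its very definition the product of three copies of $\harmdi$. Each of these identifications is immediate from the row-wise character of the operators and of the traces.

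Next I would apply Lemma \ref{helmdeco} to each of the three rows separately. Since orthogonal direct sums commute with finite Cartesian products of Hilbert spaces, the three decompositions
\begin{align*}
\Ltom &= \grad\Hgcgatom \oplus \Hdczganom \\
      &= \Hcczgatom \oplus \curl\Hccganom \\
      &= \grad\Hgcgatom \oplus \harmdi \oplus \curl\Hccganom
\end{align*}
of the vector $\Ltom$ lift, by taking the three-fold Cartesian product and re-identifying with the tensor $\Ltom$, precisely to the three asserted tensorial decompositions. Similarly, the identity $\curl\Hccganom = \curl(\Hccganom \cap \curl\Hccgatom)$ from Lemma \ref{helmdeco}, applied row-wise, yields the claimed refinement $\Curl\HCcganom = \Curl(\HCcganom \cap \Curl\HCcgatom)$.

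There is really no genuine obstacle here; the whole argument is a bookkeeping reduction to the vector case via the row-wise product structure. The only point that requires a moment's care is the orthogonality claim in the decompositions, which must be checked using the fact that the tensorial inner product is the sum of row-wise inner products, so that row-wise orthogonality implies full tensorial orthogonality.
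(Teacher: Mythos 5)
Your proposal is correct and is exactly the argument the paper has in mind: the paper simply labels Corollary~\ref{helmdecoten} (together with Corollary~\ref{poincaremaxten}) as an ``obvious consequence'' of Lemmas~\ref{poincaremax} and~\ref{helmdeco}, and the obvious route is precisely the row-wise identification of the tensor spaces with threefold orthogonal products of the corresponding vector spaces, after which Lemma~\ref{helmdeco} applies row by row. Nothing to add.
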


The third important tool is Korn's first inequality
and a variant which meets our needs is the next lemma.

\begin{lem}
\mylabel{korn}
{\sf(Korn's First Inequality: Standard Version)}
There exists a constant $\cks>0$, such that the following holds:
\begin{itemize}
\item[\bf(i)] If $\gat\neq\emptyset$ then
\begin{align}
\mylabel{firstkornstandard}
(1+\cp^2)^{-1/2}\norm{v}_{\Hoom}\leq\normLtom{\Grad v}\leq\cks\normLtom{\sym\Grad v}
\end{align}
holds for all vector fields $v\in\HGcgatom$.
\item[\bf(ii)] If $\gat=\emptyset$, 
then the inequalities \eqref{firstkornstandard}
hold for all vector fields $v\in\HGom$ with $\Grad v\bot\so(3)$ and $v\bot\,\rt$.
Moreover, the second inequality of \eqref{firstkornstandard} holds
for all vector fields $v\in\HGom$ with $\Grad v\bot\so(3)$.
For all $v\in\HGom$
\begin{align}
\mylabel{firstkornstandardempty}
(1+\cp^2)^{-1/2}\norm{v-r_{v}}_{\Hoom}\leq\normLtom{\Grad v-\Tskew_{\Grad v}}
\leq\cks\normLtom{\sym\Grad v}
\end{align}
holds, where $r_{v}\in\RM$ and $\Tskew_{\Grad v}=\Grad r_{v}$ 
are given by $r_{v}(x):=\Tskew_{\Grad v}x+b_{v}$ and
$$\Tskew_{\Grad v}:=\skew\oint_{\om}\Grad v\dl\in\so(3),\quad
b_{v}:=\oint_{\om}v\dl-\Tskew_{\Grad v}\oint_{\om}x\dl_{x}\in\rt.$$
We note $v-r_{v}\bot\,\rt$ and $\Grad(v-r_{v})=\Grad v-\Tskew_{\Grad v}\bot\so(3)$.
\end{itemize}
\end{lem}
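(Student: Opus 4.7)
The plan is to derive the second (nontrivial) inequality in each display by combining the classical Korn's second inequality
\[
\norm{v}_{\Hoom}\leq c\bigl(\normLtom{v}+\normLtom{\sym\Grad v}\bigr)\quad\text{for all }v\in\HGom,
\]
(available, e.g., via Fichera's proof reproduced in \cite{leisbook}) with the Rellich compact embedding \eqref{rellichc}, through a standard indirect argument. The first inequality of each display will follow immediately from $\norm{v}_{\Hoom}^2=\normLtom{v}^2+\normLtom{\Grad v}^2$ and a componentwise application of Poincar\'e's estimate \eqref{poincareestcl}.

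For part (i), I would assume for contradiction that no such $\cks$ exists and pick a sequence $(v_n)\subset\HGcgatom$ with $\normLtom{\Grad v_n}=1$ and $\normLtom{\sym\Grad v_n}\to 0$. Poincar\'e on $\HGcgatom$ bounds $(v_n)$ in $\Hoom$, so \eqref{rellichc} yields a subsequence with $v_n\rightharpoonup v^\ast$ in $\Hoom$ and $v_n\to v^\ast$ in $\Ltom$. Weak closedness of $\HGcgatom$ and $\sym\Grad v_n\rightharpoonup\sym\Grad v^\ast=0$ would then give $v^\ast\in\HGcgatom\cap\RM$. Since any rigid motion $\Tskew x+b$ whose generalized trace vanishes on the non-empty relatively open Lipschitz subset $\gat\subset\ga$ is identically zero, $v^\ast=0$. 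Korn's second inequality would then force $\norm{v_n}_{\Hoom}\to 0$, contradicting $\normLtom{\Grad v_n}=1$.

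For the first assertion of part (ii) I would run the same scheme on the closed subspace $V:=\{v\in\HGom:\Grad v\bot\so(3),\;v\bot\rt\}$, now invoking Rellich on $\Hoom$. The new point to check is $V\cap\RM=\{0\}$: if $r(x)=\Tskew x+b\in V$, then $\Grad r=\Tskew\bot\so(3)$ in $\rttt$ forces $\Tskew=0$, and then $r=b\bot\rt$ forces $b=0$. The second-inequality-only version for $v\in\HGom$ with $\Grad v\bot\so(3)$ would follow by applying this to $v-a_v$, which retains $\Grad(v-a_v)=\Grad v\bot\so(3)$ and additionally satisfies $v-a_v\bot\rt$.

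For the displacement formula \eqref{firstkornstandardempty} I would set $w:=v-r_v$. A direct computation with the given $b_v$ yields $\oint_\om w\,\dl=0$, i.e.\ $w\bot\rt$; and since $\Tskew_{\Grad v}=\skew\oint_\om\Grad v\,\dl$ is the $\Ltom$-orthogonal projection of the constant field $\oint_\om\Grad v\,\dl$ (and hence of $\Grad v$) onto the constant skew-symmetric tensors, one has $\Grad w=\Grad v-\Tskew_{\Grad v}\bot\so(3)$. Applying the first assertion of part (ii) to $w\in\HGom$ then delivers \eqref{firstkornstandardempty}. The main technical obstacle is the kernel-triviality step in part (i), namely the unique-continuation-type fact that a rigid motion vanishing in the generalized trace sense on the non-empty Lipschitz patch $\gat$ must be zero; this rests on the positive two-dimensional Hausdorff measure of $\gat$ and the uniqueness of affine functions determined by their values on a set of positive measure.
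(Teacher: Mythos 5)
Your proof is the standard indirect argument from Korn's second inequality and Rellich's selection theorem, which is precisely the route the paper takes; the paper's own proof of Lemma~\ref{korn} is just the one-line remark that the assertions follow from these two ingredients, and your sketch fills in the intended details, including the reduction to normalised fields via $r_{v}$ for \eqref{firstkornstandardempty}. One small correction: in justifying $\HGcgatom\cap\RM=\{0\}$ you appeal to ``uniqueness of affine functions determined by their values on a set of positive measure,'' but a general affine map $\rt\to\rt$ can vanish on an entire plane, so positive $2$-dimensional Hausdorff measure of $\gat$ alone does not suffice (think of a flat boundary patch). The fact you actually need is specific to $\so(3)$: a nonzero $\Tskew\in\so(3)$ has rank $2$, so $\{x:\Tskew x+b=0\}$ is empty or an affine line, which cannot contain a relatively open Lipschitz patch of $\ga$; with that fix the kernel-triviality step is complete.
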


\begin{proof}
As already mentioned in the introduction,
the assertions are easy consequences of Korn's second inequality
and Rellich's selection theorem for $\Hoom$.
\end{proof}

\begin{rem}
\mylabel{kornrem}
Note that $\Tskew_{\Grad v}=\pi_{\so(3)}\Grad v$, where
$\pi_{\so(3)}:\Ltom\to\so(3)$ denotes the $\Ltom$-orthogonal projection 
onto $\so(3)$. Thus, the assertion
$$\Grad(v-r_{v})=\Grad v-\Tskew_{\Grad v}=(\id-\pi_{\so(3)})\Grad v\bot\so(3)$$
is trivial. Moreover, generally for $\T\in\Ltom$
\begin{align}
\mylabel{defTpi}
\pi_{\so(3)}\T:=\Tskew_{\T}:=\skew\oint_{\om}\T\dl\in\so(3)
\end{align}
holds. Equivalent to \eqref{firstkornstandardempty} we have for all $v\in\HGom$
\begin{align*}
(1+\cp^2)^{-1/2}\norm{v}_{\Hoom}
&\leq\big(\normLtom{\na v}^2
+\normLtom{a_{v}}^2\big)^{1/2}\\
&\leq\ck\big(\normLtom{\sym\na v}^2
+\normLtom{\Tskew_{\Grad v}}^2+\normLtom{a_{v}}^2\big)^{1/2}\\
&\leq\ck\big(\normLtom{\sym\na v}^2
+\norm{r_{v}}_{\Hoom}^2\big)^{1/2}
\end{align*}
with
$$a_{v}=\pi_{\rt}v:=\oint_{\om}v\dl\in\rt,$$
where $\pi_{\rt}:\Ltom\to\rt$ denotes the $\Ltom$-orthogonal projection 
onto $\rt$. For details, we refer to the appendix.
\end{rem}

\section{Main results}

We start with generalizing Korn's first inequality 
from gradient tensor fields to merely irrotational tensor fields.

\subsection{Extending Korn's first inequality to irrotational tensor fields}

\begin{lem}
\mylabel{constlem}
Let $\gat\neq\emptyset$ and $u\in\Hgom$ with $\grad u\in\Hcczgatom$.
Then, $u$ is constant on any connected component of $\gat$.
\end{lem}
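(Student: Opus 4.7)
The statement is geometrically natural: the condition $\grad u\in\Hcczgatom$ encodes that the tangential trace $\nu\times\grad u$ vanishes on $\gat$ in the distributional sense, and the tangential component of $\grad u$ on $\gat$ is exactly the surface gradient of the boundary trace $u|_{\gat}\in\Hgen{1/2}{}{}(\gat)$. Hence $\grad_{\gat}(u|_{\gat})=0$, which forces $u|_{\gat}$ to be locally constant on each connected component of $\gat$. The plan is to realise this intuition rigorously by applying two integration-by-parts identities to a common smooth test field and then comparing them.

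For the test field I would take $\Psi\in\Ci(\ol{\om},\rt)$ satisfying $\supp\Psi\cap\gan=\emptyset$ (so $\Psi$ vanishes in a neighbourhood of $\gan$) and $\Psi\cdot\nu=0$ on $\gat$ (so $\Psi$ is tangential to $\ga$ along $\gat$). First, since $\grad u\in\Hccgatom$, the duality characterisation of the vanishing tangential trace combined with $\curl\grad u=0$ yields
\[
\int_{\om}\grad u\cdot\curl\Psi\dl=0.
\]
On the other hand, because $\div\curl\Psi=0$ and $\curl\Psi$ itself vanishes near $\gan$, integration by parts for the gradient gives
\[
\int_{\om}\grad u\cdot\curl\Psi\dl=\int_{\gat}u\,(\nu\cdot\curl\Psi)\,dS.
\]
Equating the two identities forces
\[
\int_{\gat}u\,(\nu\cdot\curl\Psi)\,dS=0
\]
for every admissible test field $\Psi$.

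Because $\Psi$ is tangential on $\gat$, the restriction $(\nu\cdot\curl\Psi)|_{\gat}$ equals the surface rotation $\curl_{\gat}\psi$ of $\psi:=\Psi|_{\gat}$ and depends only on $\psi$. Conversely, every smooth tangential field $\psi$ on $\gat$ with compact support in $\gat$ extends to an admissible $\Psi$ of the above type by a partition of unity and local flattening of the Lipschitz boundary, exactly as in the construction underlying the definition of $\Hccgatom$. Thus $\int_{\gat}u\,\curl_{\gat}\psi\,dS=0$ for all such $\psi$, and integration by parts on the surface $\gat$ (no boundary term occurs since $\supp\psi$ is compact in $\gat$) rewrites this as $\int_{\gat}(\nu\times\grad_{\gat}(u|_{\gat}))\cdot\psi\,dS=0$. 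Therefore $\nu\times\grad_{\gat}(u|_{\gat})=0$ as a distribution on $\gat$, i.e., $\grad_{\gat}(u|_{\gat})=0$, and $u|_{\gat}$ is constant on every connected component of $\gat$.

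The main technical hurdle is the extension step: producing a smooth extension $\Psi$ of a prescribed smooth compactly supported tangential field on $\gat$ to $\ol{\om}$ which remains tangential on $\gat$ and has support away from $\gan$. Since $\gat$ and $\gan$ are separated by a Lipschitz curve, this is handled by the same partition-of-unity and local flattening construction that already underlies the very definition of $\Hccgatom$ and of the generalised tangential trace on $\gat$.
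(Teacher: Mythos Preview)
Your duality-and-surface-calculus route is genuinely different from the paper's argument. The paper proceeds by a local \emph{extension by zero}: for $x\in\gat$ one chooses a small ball $B_{2r}$ with $\ga\cap B_{2r}\subset\gat$ and a cutoff $\varphi\in\Cic(B_{2r})$ with $\varphi=1$ on $B_r$; since $\grad u\in\Hccgatom$, the field $\varphi\grad u$ extends by zero across $\gat$ to an element of $\Hcgen{}{}{B_{2r}}$, and on the simply connected ball $B_r$ it is curl-free, hence equals $\grad\tilde u$ for some $\tilde u\in\Hggen{}{}{B_r}$. Because $\tilde u$ is constant on $B_r\setminus\ol{\om}$ and differs from $u$ by a constant on $B_r\cap\om$, the trace $u|_{B_r\cap\gat}$ is constant. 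No boundary differential operators are ever invoked.

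Your approach is sound in outline, but two steps are more delicate in the merely Lipschitz setting than you indicate. First, asking for a \emph{smooth} extension $\Psi\in\Ci(\ol{\om})$ with $\Psi\cdot\nu=0$ on $\gat$ is generally not achievable, since $\nu$ is only $L^\infty$; fortunately this tangentiality is nowhere needed (your first two identities hold for any smooth $\Psi$ supported away from $\gan$, and $\nu\cdot\curl\Psi|_{\gat}$ depends only on the tangential part of $\Psi|_{\gat}$ anyway). Second, the surface operators $\curl_{\gat}$, $\grad_{\gat}$ and surface integration by parts on a Lipschitz $\gat$ are not routine; one must either appeal to the tangential-trace theory on Lipschitz boundaries or, more elementarily, pass to a single Lipschitz graph chart, where the identity $\int_{\gat}u\,(\nu\cdot\curl\Psi)\,dS=0$, with $\Psi$ independent of the graph direction and with vanishing normal component, becomes $\int\tilde u\,(\p_1\phi_2-\p_2\phi_1)\,dx'=0$ for arbitrary test functions $\phi_1,\phi_2$, whence $\tilde u$ is locally constant. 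With these repairs your argument goes through; the paper's version is shorter precisely because it bypasses all boundary calculus.
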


\begin{proof}
It is sufficient to show that $u$ is locally constant.
Let $x\in\gat$ and $B_{2r}:=B_{2r}(x)$ be an open ball of radius $2r>0$
around $x$ such that $B_{2r}$ is covered by a Lipschitz-chart domain
and $\ga\cap B_{2r}\subset\gat$.
Moreover, we pick some $\varphi\in\Cic(B_{2r})$ with
$\varphi|_{B_{r}}=1$.
Then $\varphi\grad u\in\Hcgen{}{\circ}{\om\cap B_{2r}}$.
Thus, the extension by zero $v$ of $\varphi\grad u$ to $B_{2r}$
belongs to $\Hcgen{}{}{B_{2r}}$. 
Hence, $v|_{B_{r}}\in\Hcgen{0}{}{B_{r}}$.
Since $B_{r}$ is simply connected, 
there exists a $\tilde{u}\in\Hggen{}{}{B_{r}}$
with $\grad\tilde{u}=v$ in $B_{r}$. 
In $B_{r}\setminus\ol{\om}$ we have $v=0$.
Therefore, $\tilde{u}|_{B_{r}\setminus\ol{\om}}=\tilde{c}$ 
with some $\tilde{c}\in\rz$.
Moreover, $\grad u=v=\grad\tilde{u}$ holds in $B_{r}\cap\om$,
which yields $u=\tilde{u}+c$ in $B_{r}\cap\om$ with some $c\in\rz$.
Finally, $u|_{B_{r}\cap\gat}=\tilde{c}+c$ is constant.
\end{proof}

\begin{lem}
\mylabel{kornlem}
{\sf(Korn's First Inequality: Tangential Version)}
Let $\gat\neq\emptyset$.
There exists a constant $\ckt\geq\cks$, such that
$$\normLtom{\Grad v}\leq\ckt\normLtom{\sym\Grad v}$$
holds for all vector fields $v\in\HGom$ with $\Grad v\in\HCczgatom$.
\end{lem}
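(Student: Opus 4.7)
The plan is an indirect argument patterned after the classical derivation of Korn's first from Korn's second inequality, with Lemma \ref{constlem} supplying the rigidity information needed to kill any nontrivial rigid-motion limits. Suppose the inequality fails: then there is a sequence $(v_k)\subset\HGom$ with $\Grad v_k\in\HCczgatom$, $\normLtom{\Grad v_k}=1$, and $\normLtom{\sym\Grad v_k}\to 0$.

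First I would normalize. Since constant vectors lie in $\HGom$ and affect neither $\Grad v_k$ nor its membership in $\HCczgatom$, I may replace $v_k$ by $v_k-\oint_{\om}v_k\dl$ so that $v_k\bot\,\rt$. Componentwise Poincar\'e \eqref{poincarewithoutbcone} then gives $\normLtom{v_k}\leq\cp$, and the classical Korn's second inequality (cf.\ the introduction) bounds $\norm{v_k}_{\Hoom}$ uniformly in $k$. Rellich's theorem extracts a subsequence with $v_k\to v$ strongly in $\Ltom$ and $\Grad v_k\rightharpoonup\Grad v$ weakly in $\Ltom$. Because $\sym\Grad v_k\to 0$ strongly in $\Ltom$, we obtain $\sym\Grad v=0$, so $v\in\RM$, say $v(x)=Ax+b$ with $A\in\so(3)$. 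As $\HCczgatom$ is closed, and hence weakly closed, in $\Ltom$, we have $\Grad v=A\in\HCczgatom$; the normalization $v\bot\,\rt$ also passes to the limit.

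The most delicate step -- and, I expect, the main obstacle -- is to show $v=0$. Applying Lemma \ref{constlem} componentwise to the scalars $v_n\in\Hgom$ (whose gradients are the constants $A_n\in\Hcczgatom$), each $v_n(x)=A_n\cdot x+b_n$ must be constant on every connected component of $\gat$; equivalently, for each $n$ the constant row $A_n$ is parallel to $\nu(x)$ at almost every $x\in\gat$. If the unit normal $\nu$ on $\gat$ attains two linearly independent values, this immediately forces $A_n=0$ for each $n$; if instead $\gat$ is locally flat with constant normal $\nu_0$, then $A=\alpha\,\trans{\nu_0}$ is rank one for some $\alpha\in\rt$, and the skew-symmetry $A+\trans{A}=0$ (a short linear algebra computation using that a nonzero skew matrix in $\rt$ has rank two) again collapses $A$ to zero. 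Hence $A=0$, so $v=b$ is constant, and $v\bot\,\rt$ forces $v=0$.

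To close the contradiction, invoke Korn's second inequality once more: $\normLtom{\Grad v_k}\leq\norm{v_k}_{\Hoom}\leq\ck(\normLtom{v_k}+\normLtom{\sym\Grad v_k})\to 0$, since both terms on the right tend to zero. This contradicts $\normLtom{\Grad v_k}=1$. Enlarging the resulting constant if necessary yields some $\ckt\geq\cks$ with the claimed inequality.
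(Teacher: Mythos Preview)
Your argument is correct, but the paper's proof is considerably shorter and more direct. Instead of a contradiction argument, the paper applies Lemma \ref{constlem} immediately to the given $v$: each component $v_n$ is constant on any connected component $\tilde{\ga}$ of $\gat$, so there is a vector $c_v\in\rt$ with $v-c_v\in\HGgen{}{\circ}{\tilde{\ga},\om}$, and the standard Korn's first inequality (Lemma \ref{korn}(i)) with boundary piece $\tilde{\ga}$ finishes the proof in one line. In other words, Lemma \ref{constlem} already upgrades the tangential condition on $\Grad v$ to a genuine Dirichlet condition on $v$ (after subtracting a constant), so no compactness or limit analysis is needed.

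Your route works too, and the delicate step---showing that a constant skew-symmetric $A$ in $\HCczgatom$ must vanish---is handled correctly: either the normal $\nu$ on $\gat$ spans more than a line, forcing each row $A_n=0$ directly, or $\gat$ lies in parallel planes, forcing $A$ to have rank at most one, which is incompatible with $A\in\so(3)\setminus\{0\}$ (rank two). This geometric case split is perfectly valid but somewhat heavier than the paper's reduction, which never needs to analyze rigid motions at all. The trade-off is that your approach is the ``generic'' Korn-type compactness argument and would transfer to situations where one cannot so cleanly subtract a constant, whereas the paper's proof exploits the specific structure here to stay elementary.
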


In classical terms, $\Grad v\in\HCczgatom$ 
means that the restricted tangential traces
$\nu\times\grad v_{n}|_{\gat}$ vanish, i.e.,
$\grad v_{n}=\na v_{n}$, $n=1,\dots,3$, are normal at $\gat$. 
In other words, $\tau\cdot\na v_n|_{\gat}=0$ 
for all tangential vectors fields $\tau$ on $\gat$.\\

\begin{proof}
Let $\tilde{\ga}\neq\emptyset$ 
be a relatively open connected component of $\gat$.
Applying Lemma \ref{constlem} to each component of $v$,
there exists a constant vector $c_{v}\in\rt$ 
such that $v-c_{v}$ belongs to $\HGgen{}{\circ}{\tilde{\ga},\om}$.
Then, Lemma \ref{korn} (i)
(with $\gat=\tilde{\ga}$ and a possibly larger $\ckt$) 
completes the proof.
\end{proof}

\begin{defi}
\mylabel{domdef}
$\om$ is called `sliceable',
if there exist $J\in\nz$ and $\om_{j}\subset\om$, $j=1,\dots,J$, 
such that $\om\setminus(\om_{1}\cup\ldots\cup\om_{J})$ is a nullset and for $j=1,\dots,J$
\begin{itemize}
\item[\bf(i)] 
$\om_{j}$ are open, disjoint and simply connected Lipschitz subdomains of $\om$,
\item[\bf(ii)] 
$\gatj:=\interior_{\mathsf{rel}}(\ol{\om_{j}}\cap\gat)\neq\emptyset$, if $\gat\neq\emptyset$.
\end{itemize}
\end{defi}

Here, $\interior_{\mathsf{rel}}$ denotes the interior with respect to
the topology on $\ga$.

\begin{rem}
\mylabel{slicerem}
Assumptions of this type are not new, see e.g.
\cite[p.836]{amrouchebernardidaugegiraultvectorpot} or \cite[p.3]{Ciarlet10b}.
From a practical point of view, 
\ul{all} domains considered in applications are sliceable,
but it is not clear whether every Lipschitz domain is already sliceable. 
\end{rem}

\begin{figure}
\center
\includegraphics[scale=0.08]{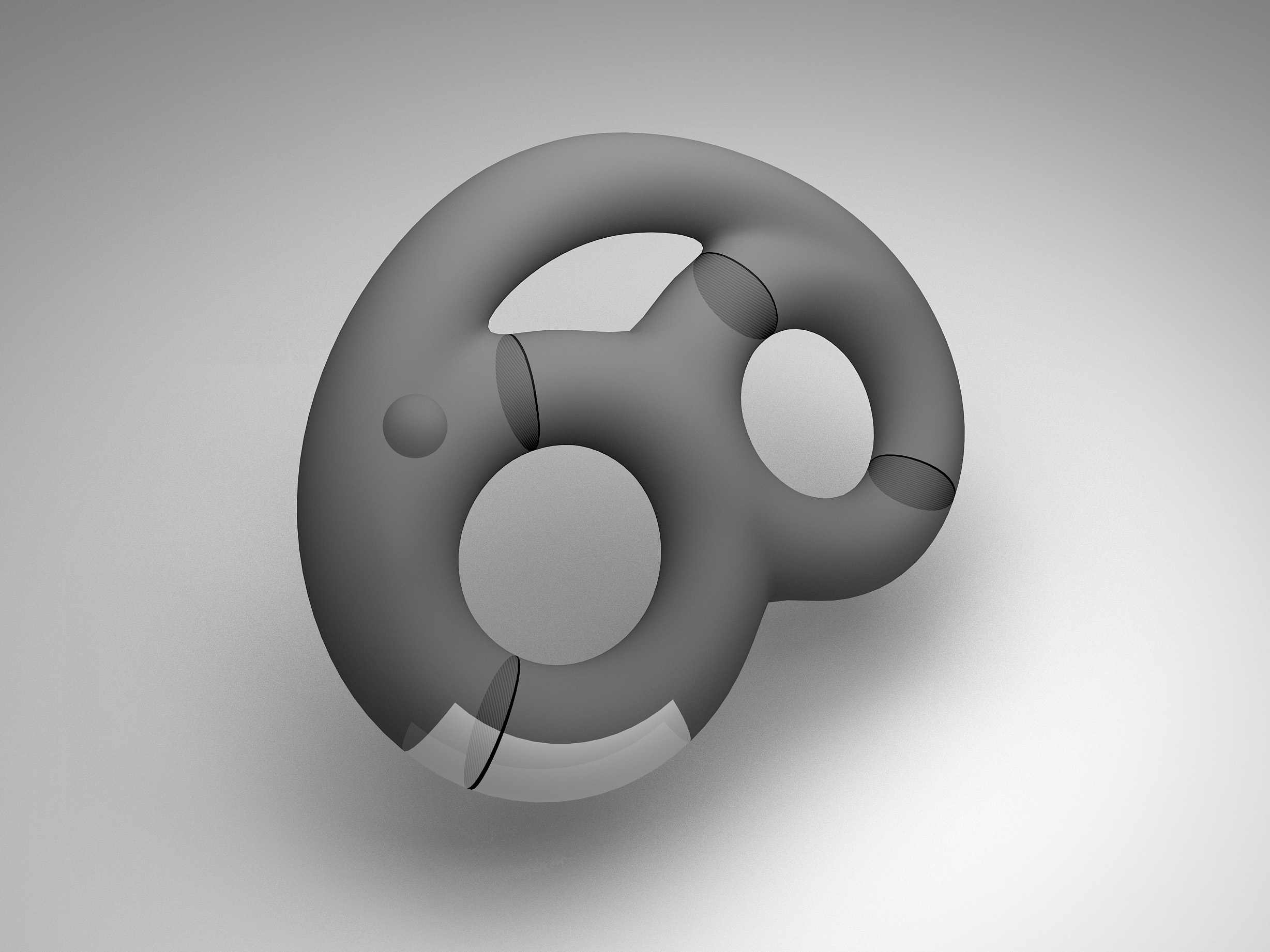}
\includegraphics[scale=0.08]{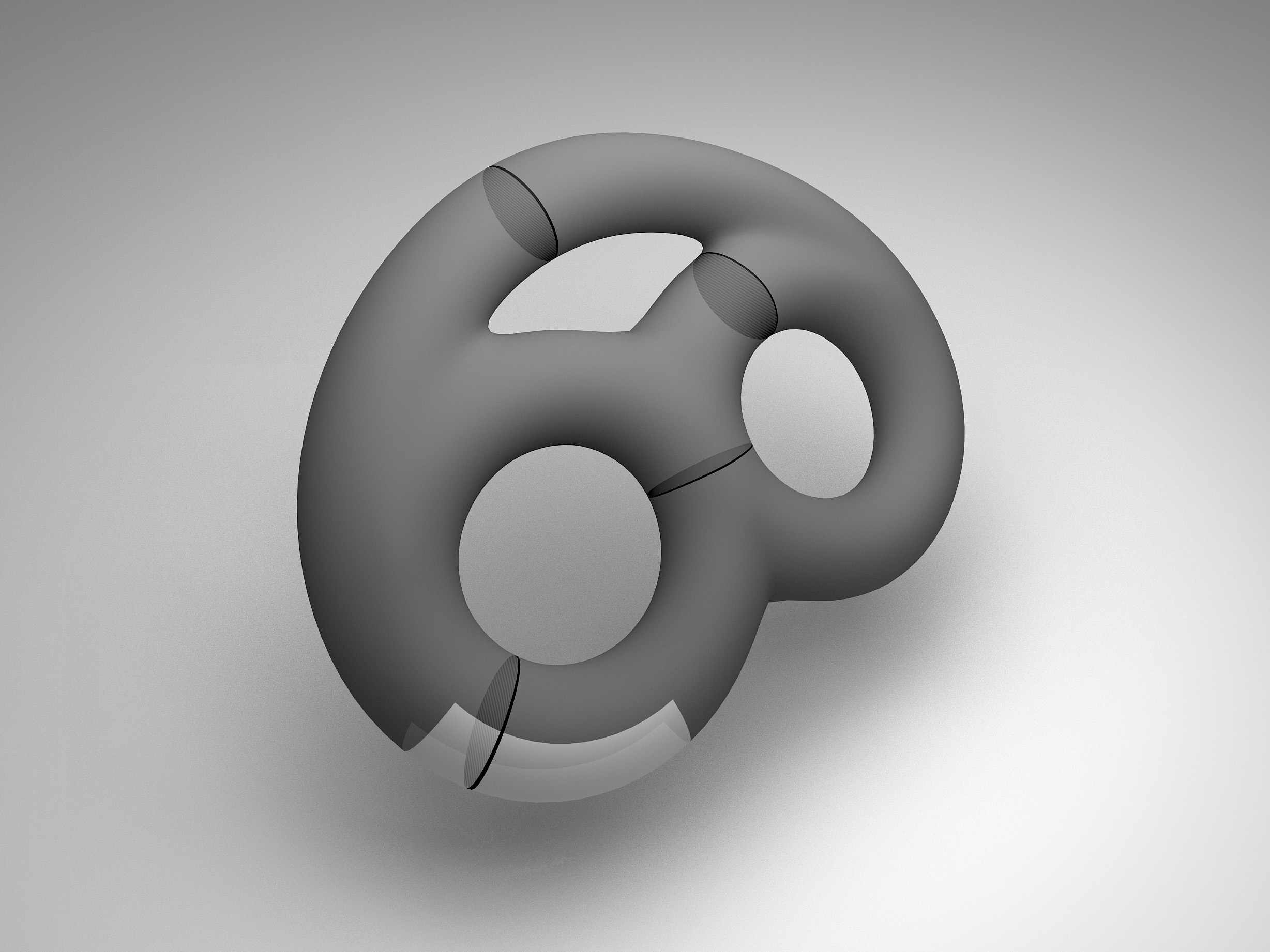}
\caption{Two ways to cut a sliceable domain into two ($J=2$) subdomains.
Roughly speaking, a domain is sliceable
if it can be `cut' into finitely many simply connected Lipschitz `pieces' $\om_{j}$, i.e., 
any closed curve inside some piece $\om_{j}$
is homotop to a point, this is, one has to cut all handles. 
Holes inside $\om$ are permitted since we are in 3D.}
\mylabel{brezel}
\end{figure}

\begin{lem}
\mylabel{genkornlem}
{\sf(Korn's First Inequality: Irrotational Version)}
Let $\om$ be sliceable.
There exists $\ck\geq\ckt>0$, 
such that the following inequalities hold:
\begin{itemize}
\item[\bf(i)] 
If $\gat\neq\emptyset$, then for all tensor fields $\T\in\HCczgatom$
\begin{align}
\mylabel{estTsymT}
\normLtom{\T}\leq\ck\normLtom{\sym\T}.
\end{align}
\item[\bf(ii)] 
If $\gat=\emptyset$, then for all tensor fields $\T\in\HCzom$
there exists a piece-wise constant skew-symmetric tensor field $\Tskew$ such that
\begin{align*}
\normLtom{\T-\Tskew}
&\leq\ck\normLtom{\sym\T},\\
\normLtom{\T}
&\leq\ck\big(\normLtom{\sym\T}^2+\normLtom{\Tskew}^2\big)^{1/2}.
\end{align*}
\item[\bf(ii')] 
If $\gat=\emptyset$ and $\om$ is additionally simply connected, 
then (ii) holds with the uniquely determined 
constant skew-symmetric tensor field $\Tskew:=\Tskew_{\T}=\pi_{\so(3)}\T$
given by \eqref{defTpi}.
Moreover, $\T-\Tskew_{\T}\in\HCzom\cap\so(3)^{\bot}$ and
$\Tskew_{\T}=0$ if and only if $\T\bot\so(3)$. 
Thus, \eqref{estTsymT} holds for all $\T\in\HCzom\cap\so(3)^{\bot}$.
\end{itemize}
\end{lem}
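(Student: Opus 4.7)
The plan is to exploit the sliceable decomposition $\om = \om_1 \cup \cdots \cup \om_J$ and reduce each assertion to a classical Korn inequality on each simply connected Lipschitz piece $\om_j$. The starting observation is that $\Curl \T = 0$ together with the simple connectedness of each $\om_j$ yields, row by row via the standard Poincar\'e lemma for $\Hcom$, a vector field $v^j$ on $\om_j$ satisfying $\Grad v^j = \T$ on $\om_j$ (this is exactly the mechanism already invoked inside the proof of Lemma \ref{constlem}).

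For part (i), since $\T \in \HCczgatom$ has vanishing tangential trace on $\gat$, its restriction to $\om_j$ inherits vanishing tangential trace on $\gatj$. Lemma \ref{constlem}, applied component-wise to $v^j$, then forces $v^j$ to be constant on each connected component of $\gatj$. Choosing a suitable constant vector $a_j \in \rt$, the shifted field $v^j - a_j$ vanishes on one connected component $\tilde{\ga}_j$ of $\gatj$ (nonempty by Definition \ref{domdef}(ii)); Lemma \ref{kornlem} applied on $\om_j$ with $\tilde{\ga}_j$ in place of $\gat$ then yields a local Korn estimate with some constant $\ck_j > 0$. Squaring and summing these local estimates over $j = 1, \ldots, J$ and setting $\ck := \max_j \ck_j$ produces \eqref{estTsymT}.

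For part (ii), no boundary condition intervenes, and on each simply connected $\om_j$ we apply Lemma \ref{korn}(ii) directly to $v^j$. That lemma produces a constant skew-symmetric tensor $\Tskew_j \in \so(3)$, namely the $L^2(\om_j)$-orthogonal projection of $\T$ onto $\so(3)$, together with a local estimate bounding $\T - \Tskew_j$ by $\sym \T$ in $L^2(\om_j)$. We then assemble the piece-wise constant skew-symmetric field $\Tskew$ by $\Tskew|_{\om_j} := \Tskew_j$; squaring and summing the local estimates yields the first inequality, and the triangle inequality $\normLtom{\T} \leq \normLtom{\T - \Tskew} + \normLtom{\Tskew}$ then provides the second, possibly enlarging $\ck$. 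Part (ii') is the special case $J = 1$, $\om_1 = \om$: here $\Tskew = \Tskew_\T = \pi_{\so(3)} \T$ is the unique genuine constant skew projection from \eqref{defTpi}, and the orthogonality $\T - \Tskew_\T \bot \so(3)$ and the equivalence $\Tskew_\T = 0 \iff \T \bot \so(3)$ follow at once from $\pi_{\so(3)}$ being the $\Ltom$-orthogonal projection onto the finite-dimensional subspace $\so(3)$.

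The main obstacle lies in part (i): one must verify that the global tangential trace condition $\T \in \HCczgatom$ really does restrict faithfully to $\gatj \subset \partial \om_j$ so that Lemma \ref{constlem} genuinely applies on $\om_j$, and that the newly introduced interior boundary pieces created by the slicing do not contribute spurious trace terms or obstruct the local integration that yields $v^j$. The sliceability hypothesis was crafted precisely for this: it guarantees nonempty, relatively open $\gatj$ of Lipschitz type on each simply connected piece, and the same extension-by-zero and localization technique used in the proof of Lemma \ref{constlem} transfers the trace information from $\gat$ down to $\gatj$ without loss.
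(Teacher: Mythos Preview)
Your proof is correct and follows essentially the same route as the paper: slice, produce potentials $v_j$ on each simply connected piece via the Poincar\'e lemma, apply the appropriate local Korn inequality (Lemma \ref{kornlem} for (i), Lemma \ref{korn}(ii) for (ii)), and sum. The only cosmetic difference is that in part (i) you unfold the mechanism of Lemma \ref{kornlem} by hand (invoking Lemma \ref{constlem} and shifting by a constant) before then citing Lemma \ref{kornlem} itself, whereas the paper simply verifies the restricted tangential trace condition on $\gatj$ via an approximating sequence $(\T^\ell)\subset\Cic(\gat;\om)$ restricted to $\ol{\om_j}$ and applies Lemma \ref{kornlem} directly; your explicit shift is therefore harmless but redundant.
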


Again we note that in classical terms 
a tensor $\T\in\HCczgatom$ is irrotational 
and the vector field $\T\tau|_{\gat}$ vanishes 
for all tangential vector fields $\tau$ at $\gat$.

\begin{rem}
\mylabel{remgarroni}
Without proof the last part of the result Lemma \ref{genkornlem} (ii') 
has been used implicitly in \cite{Garroni10}.
The authors of \cite{Garroni10} neglect the problems 
caused by non-simply connected domains.
See also our discussion in \cite{neffpaulywitschgenkornrtzap}.
\end{rem}

\begin{proof}
Let $\gat\neq\emptyset$. According to Definition \ref{domdef} 
we decompose $\om$ into $\om_{1}\cup\ldots\cup\om_{J}$.
Let $\T\in\HCczgatom$ and $1\leq j\leq J$.
Then, the restriction $\T_{j}:=\T|_{\om_{j}}$ 
belongs to $\HCgen{0}{}{\om_{j}}$.
Picking a sequence $(\T^{\ell})\subset\Cic(\gat;\om)$ 
converging to $\T$ in $\HCom$, we see that
$(\T^{\ell}|_{\ol{\om_{j}}})\subset\Cic(\gatj;\om)$ 
converges to $\T_{j}$ in $\HCgen{}{}{\om_{j}}$.
Thus, $\T_{j}\in\HCgen{0}{\circ}{\gatj,\om_{j}}$.
By definition, each $\om_{j}$ is simply connected.
Therefore, there exist potential vector fields $v_{j}\in\HGgen{}{}{\om_{j}}$
with $\Grad v_{j}=\T_{j}$.
Lemma \ref{kornlem} yields for all $j$
\begin{align*}
\norm{\T_{j}}_{\Lt(\om_j)}
&\leq\cktj\norm{\sym\T_{j}}_{\Lt(\om_j)}
\intertext{with $\cktj>0$. Summing up, we obtain}
\normLtom{\T}
&\leq\ck\normLtom{\sym\T},\quad\ck:=\max_{j=1,\dots,J}\cktj,
\end{align*}
which proves (i). 
Now, we assume $\gat=\emptyset$.
Let $\T\in\HCzom$ and again let $\om$ be decomposed into
$\om_{1}\cup\ldots\cup\om_{J}$ by Definition \ref{domdef}.
Again, since every $\om_{j}$, $j=1,\ldots,J$, is simply connected
and $\T_{j}\in\HCgen{0}{}{\om_{j}}$,
there exist vector fields $v_{j}\in\HGgen{}{}{\om_{j}}$ 
with $\Grad v_{j}=:\T_{j}=\T$ in $\om_{j}$.
By Korn's first inequality, Lemma \eqref{korn} (ii), 
there exist positive $\cksj$ and $\Tskew_{\T_{j}}\in\so(3)$ with
$$\norm{\T_{j}-\Tskew_{\T_{j}}}_{\Lt(\om_{j})}
\leq\cksj\norm{\sym\T_{j}}_{\Lt(\om_{j})},\quad
\Tskew_{\T_{j}}=\skew\oint_{\om_{j}}\T_{j}\dl=\skew\oint_{\om_{j}}\T\dl.$$
We define the piece-wise constant skew-symmetric tensor field $\Tskew$ a.e. by
$\Tskew|_{\om_{j}}:=\Tskew_{\T_{j}}$ and set $\ds\ck:=\max_{j=1,\ldots,J}\cksj$.
Summing up gives (ii). 
We have also proved the first assertion of (ii'),
since we do not have to slice if $\om$ is simply connected.
The remaining assertion of (ii') concerning the projections 
are trivial, since $\pi_{\so(3)}:\Ltom\to\so(3)$ 
is a $\Ltom$-orthogonal projector.
We note that this can be seen also by direct calculations:
To show that $\T-\Tskew_{\T}$ belongs to $\HCzom\cap\so(3)^{\bot}$ 
we note $\Tskew_{\T}\in\HCzom$ and compute for all $\Tskew\in\so(3)$
(compare with \eqref{AvAscp})
\begin{align*}
\scpLtom{\Tskew_{\T}}{\Tskew}
&=\scps{\int_{\om}\T\dl}{\Tskew}_{\rttt}
=\int_{\om}\scp{\T}{\Tskew}_{\rttt}\dl
=\scpLtom{\T}{\Tskew}.
\end{align*}
Hence, $\Tskew_{\T}=0$ implies $\T\bot\so(3)$.
On the other hand, setting $\Tskew:=\Tskew_{\T}$ shows that
$\T\bot\so(3)$ also implies $\Tskew_{\T}=0$.
\end{proof}

\subsection{The new inequality}

From now on, we assume generally that \ul{$\om$ is sliceable}.
For tensor fields $\T\in\HCom$ we define the semi-norm
$\dnorm{\,\cdot\,}$ by
\begin{align}
\mylabel{hsymnormdef}
\dnorm{\T}^2:=\normLtom{\sym\T}^2+\normLtom{\Curl\T}^2.
\end{align}

Our main result is presented in the following theorem.

\begin{theo}
\mylabel{mainlem}
Let $\ch:=\max\{\sqrt{2}\ck,\cm\sqrt{1+2\ck^2}\}$ 
and $\ct:=\sqrt{2}\max\{\ck,\cm(1+\ck)\}\geq\ch$. 
\begin{itemize}
\item[\bf(i)] 
If $\gat\neq\emptyset$, then for all tensor fields $\T\in\HCcgatom$ 
\begin{align}
\mylabel{estTsymTCurlT}
\normLtom{\T}\leq\ch\dnorm{\T}.
\end{align}
\item[\bf(ii)] 
If $\gat=\emptyset$, then for all tensor fields $\T\in\HCom$ 
there exists a piece-wise constant skew-symmetric tensor field $\Tskew$, such that
$$\normLtom{\T-\Tskew}\leq\ct\dnorm{\T},\quad
\normLtom{\T}\leq\sqrt2\max\{\ct,1\}\big(\dnorm{\T}^2+\normLtom{\Tskew}^2\big)^{1/2}.$$
Note that, in general $\Tskew\notin\HCom$.
\item[\bf(ii')] 
If $\gat=\emptyset$ and $\om$ is additionally simply connected, 
then for all tensor fields $\T$ in $\HCom$ 
there exists a uniquely determined constant 
skew-symmetric tensor field $\Tskew=\Tskew_{\T}\in\so(3)$, such that
\begin{align*}
\normLtom{\T-\Tskew_{\T}}
&\leq\ch\dnorm{\T},\\
\normLtom{\T}
&\leq\sqrt2\max\{\ch,1\}\big(\dnorm{\T}^2+\normLtom{\Tskew_{\T}}^2\big)^{1/2},\\
\norm{\T-\Tskew_{\T}}_{\HCom}
&\leq(1+\ch^2)^{1/2}\dnorm{\T},\\
\norm{\T}_{\HCom}
&\leq\sqrt2(1+\ch^2)^{1/2}\big(\dnorm{\T}^2+\normLtom{\Tskew_{\T}}^2\big)^{1/2}.
\end{align*}
and $\Tskew_{\T}=\pi_{\so(3)}\T$ is given by \eqref{defTpi}.
Moreover, $\T-\Tskew_{\T}\in\HCom\cap\so(3)^{\bot}$ and
$\Tskew_{\T}=0$ if and only if $\T\bot\so(3)$. 
Thus, \eqref{estTsymTCurlT} holds for all $\T\in\HCom\cap\so(3)^{\bot}$.
Furthermore, $\Tskew_{\T}$ can be represented by
$$\Tskew_{\T}=\Tskew_{\TR}:=\pi_{\so(3)}\TR=\skew\oint_{\om}\TR\dl\in\so(3),$$
where $\TR$ denotes the Helmholtz projection of $\T$ 
onto $\HCzom$ according to Corollary \ref{helmdecoten}.
\end{itemize}
\end{theo}

\begin{proof}
Let $\gat\neq\emptyset$ and $\T\in\HCcgatom$.
According to Corollary \ref{helmdecoten} we orthogonally decompose 
$$\T=\TR+\TS\in\HCczgatom\oplus\Curl\HCcganom.$$
Then, $\Curl\TS=\Curl\T$ and we observe that $\TS$ belongs to
$$\HCcgatom\cap\Curl\HCcganom
=\HCcgatom\cap\HDczganom\cap(\harmdi^3)^{\bot}.$$
Hence, by Corollary \ref{poincaremaxten} we have
\begin{align}
\mylabel{estpsi}
\normLtom{\TS}&\leq\cm\normLtom{\Curl\T}.
\intertext{Then, by orthogonality, Lemma \ref{genkornlem} (i) for $\TR$ 
and \eqref{estpsi} we obtain}
\normLtom{\T}^2=\normLtom{\TR}^2+\normLtom{\TS}^2
&\leq\ck^2\normLtom{\sym\TR}^2+\normLtom{\TS}^2\nonumber\\
&\leq2\ck^2\normLtom{\sym\T}^2+(1+2\ck^2)\normLtom{\TS}^2\nonumber
\end{align}
and thus $\normLtom{\T}^2\leq\ch^2\dnorm{\T}^2$, which proves (i).

Now, let $\gat=\emptyset$ and $\T\in\HCom$.
First, we show (ii').
We follow in close lines the first part of the proof.
For the convenience of the reader, we repeat the previous 
arguments in this special case. 
According to Corollary \ref{helmdecoten} we orthogonally decompose 
$$\T=\TR+\TS\in\HCzom\oplus\Curl\HCcom.$$
Then, $\Curl\TS=\Curl\T$ and
$$\TS\in\HCom\cap\Curl\HCcom
=\HCom\cap\HDczom\cap(\harmdi^3)^{\bot}.$$
Again, by Corollary \ref{poincaremaxten} we have \eqref{estpsi}.
Note that $\Tskew_{\TR}\in\HCzom$ since $\Tskew_{\TR}\in\so(3)$ is constant.
Then, by orthogonality, Lemma \ref{genkornlem} (ii') applied to $\TR$ and \eqref{estpsi}
\begin{align*}
\normLtom{\T-\Tskew_{\TR}}^2
=\normLtom{\TR-\Tskew_{\TR}}^2+\normLtom{\TS}^2
&\leq\ck^2\normLtom{\sym\TR}^2+\normLtom{\TS}^2\\
&\leq2\ck^2\normLtom{\sym\T}^2+(1+2\ck^2)\normLtom{\TS}^2
\end{align*}
and thus $\normLtom{\T-\Tskew_{\TR}}^2\leq\ch^2\dnorm{\T}^2$.
We need to show $\Tskew_{\T}=\Tskew_{\TR}$ or equivalently $\Tskew_{\TS}=0$.
For this, let $\Tskew\in\so(3)$ and $\TS=\Curl X$ with $X\in\HCcom$. Then
$$\scpLtom{\Tskew_{\TS}}{\Tskew}
=\scps{\int_{\om}\TS\dl}{\Tskew}_{\rttt}
=\scpLtom{\Curl X}{\Tskew}=0.$$
By setting $\Tskew:=\Tskew_{\TS}$, we get $\Tskew_{\TS}=0$.
The proof of (ii') is complete, since all other remaining assertions are trivial.
Finally, we show (ii). For this, we follow the proof of (ii') 
up to the point, where $\Tskew_{\TR}$ came into play.
Now, by Lemma \ref{genkornlem} (ii) for $\TR$ we get a piece-wise constant 
skew-symmetric tensor $\Tskew:=\Tskew_{\TR}$.
We note that in general $\Tskew$ does not belong to $\HCom$ anymore.
Hence, we loose the $\Ltom$-orthogonality $\TR-\Tskew\bot\TS$.
But again, by Lemma \ref{genkornlem} (ii) and \eqref{estpsi}
\begin{align*}
\normLtom{\T-\Tskew}
&\leq\normLtom{\TR-\Tskew}+\normLtom{\TS}
\leq\ck\normLtom{\sym\TR}+\normLtom{\TS}\\
&\leq\ck\normLtom{\sym\T}+(1+\ck)\normLtom{\TS}\\
&\leq\ck\normLtom{\sym\T}+(1+\ck)\cm\normLtom{\Curl\T}
\end{align*}
and thus $\normLtom{\T-\Tskew}\leq\ct\dnorm{\T}$, which proves (ii).
\end{proof}

As easy consequence we obtain:

\begin{theo}
\mylabel{maintheo}
Let $\gat\neq\emptyset$ resp. $\gat=\emptyset$ and $\om$ be simply connected.
Then, on $\HCcgatom$ resp. $\HCom\cap\so(3)^{\bot}$ 
the norms $\norm{\,\cdot\,}_{\HCom}$
and $\dnorm{\,\cdot\,}$ are equivalent.
In particular, $\dnorm{\,\cdot\,}$ is a norm on 
$\HCcgatom$ resp. $\HCom\cap\so(3)^{\bot}$
and there exists a positive constant $c$, such that
$$c\norm{\T}_{\HCom}
\leq\dnorm{\T}
=\big(\normLtom{\sym\T}^2+\normLtom{\Curl\T}^2\big)^{1/2}
\leq\norm{\T}_{\HCom}$$
holds for all $\T$ in $\HCcgatom$ resp. $\HCom\cap\so(3)^{\bot}$.
\end{theo}

\subsection{Consequences and relations to Korn and Poincar\'e}

There are two immediate consequences 
of Theorem \ref{mainlem} and the inclusion
$$\Grad\HGcgatom\subset\HCczgatom$$
if the tensor field $\T$ is either {\it irrotational} or {\it skew-symmetric}.

For irrotational tensor fields $\T$, i.e., 
$\Curl\T=0$ or even $\T=\Grad v$, 
we obtain generalized versions of Korn's first inequality.
E.g., in the case $\gat\neq\emptyset$ we get:

\begin{cor}
\mylabel{genkorn}
{\sf(Korn's First Inequality)}
Let $\gat\neq\emptyset$.
\begin{itemize}
\item[\bf(i)] $\normLtom{\T}\leq\hat{c}\normLtom{\sym\T}$ 
holds for all tensor fields $\T\in\HCczgatom$.
\item[\bf(ii)] $\normLtom{\Grad v}\leq\hat{c}\normLtom{\sym\Grad v}$
holds for all vector fields $v\in\HGom$ with $\Grad v\in\HCczgatom$.
\item[\bf(iii)] $\normLtom{\Grad v}\leq\hat{c}\normLtom{\sym\Grad v}$
holds for all vector fields $v\in\HGcgatom$.
\end{itemize}
\end{cor}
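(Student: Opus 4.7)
The plan is to derive all three statements as immediate specializations of Theorem \ref{mainlem} (i), using the fact that the $\Curl$ term in the seminorm $\dnorm{\,\cdot\,}$ vanishes for irrotational tensor fields. Since the hypothesis $\gat\neq\emptyset$ is the one already covered by part (i) of the main theorem, no additional analytical work should be required; the only thing to check is that the relevant tensor fields indeed sit inside $\HCcgatom$ with vanishing $\Curl$.

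First I would treat (i). Given $\T\in\HCczgatom$, the definitions yield $\T\in\HCcgatom$ and $\Curl\T=0$, so by \eqref{hsymnormdef} the seminorm collapses to $\dnorm{\T}=\normLtom{\sym\T}$. Plugging this into Theorem \ref{mainlem} (i) gives $\normLtom{\T}\leq\ch\normLtom{\sym\T}$, which is the asserted inequality with $\hat c:=\ch$.

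For (ii), I would apply (i) to the tensor field $\T:=\Grad v$. The hypothesis $\Grad v\in\HCczgatom$ is exactly what is needed, and $\sym\T=\sym\Grad v$, so the bound transfers directly. For (iii), I would use the inclusion $\Grad\HGcgatom\subset\HCczgatom$ noted just before the statement of the corollary: for $v\in\HGcgatom$ the gradient $\Grad v$ automatically lies in $\HCczgatom$, so (ii) applies and yields the classical Korn first inequality on $\HGcgatom$.

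There is essentially no obstacle here, since all the heavy lifting (the Helmholtz decomposition, the Maxwell estimate Corollary \ref{poincaremaxten} and Korn's first inequality in its irrotational form, Lemma \ref{genkornlem}) has already been assembled inside Theorem \ref{mainlem}. The only point worth verifying carefully is that the boundary condition $\gat$ appearing in $\HCczgatom$, $\HCcgatom$ and $\HGcgatom$ is the same $\gat$ throughout, so that $\Grad\HGcgatom\subset\HCczgatom$ really does produce tensor fields with the correct restricted tangential trace — but this is immediate from approximation by $\Cicgatom$-vector fields, whose row-wise gradients are $\Cic(\gat;\om)$ tensor fields with vanishing $\Curl$.
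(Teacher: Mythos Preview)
Your proof is correct and matches the paper's approach: the corollary is stated as an immediate consequence of Theorem \ref{mainlem} (i) together with the inclusion $\Grad\HGcgatom\subset\HCczgatom$, and the chain of implications (i) $\Rightarrow$ (ii) $\Rightarrow$ (iii) you describe is exactly what the paper indicates in the paragraph following the statement.
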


These are different generalized versions of Korn's first inequality.
(iii), i.e., the classical Korn's first inequality from Lemma \ref{korn} (i), 
is implied by (ii), i.e., Lemma \ref{kornlem},
which is implied by (i), i.e., Lemma \ref{genkornlem} (i).
We note $\cks\leq\ckt\leq\ck\leq\ch$ and that in classical terms the boundary condition, 
e.g., in (ii), holds, if $\grad v_{n}=\na v_{n}$, $n=1,\dots,3$, are normal at $\gat$,
which then extends (iii) through the weaker boundary condition.

For skew-symmetric tensors fields we get back Poincar\'e's inequality.
More precisely, we may identify a scalar function $u$ 
with a skew-symmetric tensor field $\T$, i.e.,
$$\T:=\T_{u}:=\dmat{0}{0}{u}{0}{0}{0}{-u}{0}{0}\cong u
\quad\text{and hence}\quad 
\Curl\T=\dmat{\p_{2}u}{-\p_{1}u}{0}{0}{0}{0}{0}{-\p_{3}u}{\p_{2}u}.$$
Now, $\Curl\T$ is as good as $\grad u$, see \eqref{trivcurlest} and
$$\nu\times\T|_{\gat}=\dmat{\nu_{2}u|_{\gat}}{-\nu_{1}u|_{\gat}}
{0}{0}{0}{0}{0}{-\nu_{3}u|_{\gat}}{\nu_{2}u|_{\gat}}=0\qequi u|_{\gat}=0.$$
E.g., in the case $\gat\neq\emptyset$ we get by Theorem \ref{mainlem} (i):

\begin{cor}
\mylabel{genpoincare}
{\sf(Poincar\'e's Inequality)}
Let $\gat\neq\emptyset$. 
For all special skew-symmetric tensor fields 
$\T=\T_{u}$ in $\HCcgatom$, i.e., 
for all functions $u\in\Hgcgatom$ with $u\cong\T$, 
$$\normLtom{u}\leq\hat{c}\normLtom{\grad u}.$$
\end{cor}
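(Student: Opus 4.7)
The plan is to reduce the inequality to a direct application of Theorem \ref{mainlem}~(i) via the embedding $u\mapsto\T_u$ described in the paragraph preceding the corollary. There are essentially three things to check: (a) $\T_u$ lies in $\HCcgatom$ whenever $u\in\Hgcgatom$, (b) $\sym\T_u=0$, so $\dnorm{\T_u}^2=\normLtom{\Curl\T_u}^2$, and (c) the $\Ltom$-norms of $\T_u$ and $\Curl\T_u$ are comparable to those of $u$ and $\grad u$ with explicit constants.

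For (a), I would use the computation already displayed in the excerpt: since each nonzero entry of $\T_u$ equals $\pm u$, one has $\T_u\in\Ltom$ and, from the explicit formula for $\Curl\T_u$, also $\Curl\T_u\in\Ltom$ as soon as $u\in\Hgom$. The tangential trace identity
$$\nu\times\T_u|_{\gat}=0\qequi u|_{\gat}=0,$$
combined with the density of smooth functions used to define $\Hgcgatom$ and $\HCcgatom$, shows that the map $u\mapsto\T_u$ sends $\Hgcgatom$ into $\HCcgatom$. Point (b) is immediate from skew-symmetry of $\T_u$.

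For (c) I would simply count entries. The two nonzero components of $\T_u$ give
$$\normLtom{\T_u}^2=2\normLtom{u}^2,$$
while reading off the displayed matrix for $\Curl\T_u$ yields
$$\normLtom{\Curl\T_u}^2=\normLtom{\p_1 u}^2+2\normLtom{\p_2 u}^2+\normLtom{\p_3 u}^2\leq 2\normLtom{\grad u}^2.$$

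Finally, I would invoke Theorem \ref{mainlem}~(i): since $\T_u\in\HCcgatom$ and $\sym\T_u=0$,
$$\sqrt{2}\,\normLtom{u}=\normLtom{\T_u}\leq\ch\,\dnorm{\T_u}=\ch\,\normLtom{\Curl\T_u}\leq\sqrt{2}\,\ch\,\normLtom{\grad u},$$
and cancelling $\sqrt{2}$ gives exactly $\normLtom{u}\leq\ch\normLtom{\grad u}$. There is really no obstacle here; the only thing worth double-checking is the boundary-condition bookkeeping in step (a), ensuring that the embedding $u\mapsto\T_u$ is compatible with the closures defining $\Hgcgatom$ and $\HCcgatom$ (which it is, because it is a bounded linear map on smooth test functions that vanish near $\gat$).
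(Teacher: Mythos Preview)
Your proof is correct and follows essentially the same route as the paper's own argument: both verify that $u\in\Hgcgatom$ corresponds to $\T_u\in\HCcgatom$, use $\sym\T_u=0$, compute $\normLtom{\T_u}^2=2\normLtom{u}^2$ and $\normLtom{\Curl\T_u}^2\leq2\normLtom{\grad u}^2$, and then apply Theorem~\ref{mainlem}~(i). The paper's version is simply more terse, compressing your three checks into one displayed chain of inequalities.
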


\begin{proof}
We have $\T\in\HCcgatom$, if and only if $u\in\Hgcgatom$. Moreover,
$$2\normLtom{u}^2=\normLtom{\T}^2\leq\hat{c}^2\normLtom{\Curl\T}^2
\leq2\hat{c}^2\normLtom{\grad u}^2$$
holds.
\end{proof}

We note that the latter Corollary also remains true
for general skew-symmetric tensor fields $\T\in\HCcgatom$
and vector fields $v\in\HGcgatom$ with
$$\T=\dmat{0}{-v_{1}}{v_{2}}{v_{1}}{0}{-v_{3}}{-v_{2}}{v_{3}}{0}\cong v.$$

\begin{figure}
$$\xymatrix{
*+[F-:<5pt>]{\begin{minipage}{4.5cm}
\begin{center}
1. Maxwell\\[1mm]
$|v|\leq\cm(|\curl v|+|\div v|)$
\end{center}
\end{minipage}}
&
*+[F-:<5pt>]{\begin{minipage}{2.8cm}
\begin{center}
2. Poincar\'e\\[1mm]
$|u|\leq\cp|\grad u|$
\end{center}
\end{minipage}}
&
*+[F-:<5pt>]{\begin{minipage}{4.8cm}
\begin{center}
3. Korn\\[1mm]
$|\Grad v|\leq\ck|\sym\Grad v|$
\end{center}
\end{minipage}}
\\
&&
\\
*+[F-:<0pt>]{\begin{minipage}{4.4cm}
\begin{center}
I. generalized Poincar\'e\\[1mm]
$|E|\leq\cpq(|\ed E|+|\cd E|)$
\end{center}
\end{minipage}}
\ar@/^1pc/[uu]_{\substack{q=1\\E\cong v\text{ vector field}\\\ed=\curl,\,\cd=\div}}
\ar@/_3.2pc/[uur]_{\substack{q=0\\E\cong u\text{ function}\\\ed=\grad,\,\cd=0}}
&
&
*+[F-:<0pt>]{\begin{minipage}{4.9cm}
\begin{center}
II. our new inequality\\[1mm]
$|\T|\leq\ch(|\sym\T|+|\Curl\T|)$
\end{center}
\end{minipage}}
\ar@/^2pc/[uul]^{\substack{\T\in\so(3)\\(\T\text{ skew)}}\quad}
\ar@/_1pc/[uu]^{\substack{\T=\Grad v\\(\T\text{ compatible)}}}
}$$
\caption{The three fundamental inequalities are implied by two. 
For the constants we have $\cp=\cpz$, $\cm=\cpo$ and $\ck,\cp\leq\ch$.}
\mylabel{diagram}
\end{figure}

\begin{rem}
\mylabel{fundineq}
Let us consider the fundamental generalized Poincar\'e inequality 
for differential forms, i.e.,
for all $q=0,\dots,3$ there exist constants $\cpq>0$,
such that for all $q$-forms $E\in\Dqcgatom\cap\Deqcganom\cap\harmdiq^{\bot}$
\begin{align}
\mylabel{formineq}
\normLtqom{E}\leq\cpq\big(\normLtqpoom{\ed E}+\normLtqmoom{\cd E}\big)
\end{align}
holds. We note that the analogue of 
Corollary \ref{poincaremaxcor} holds as well.
Here, $E$ is a differential form of rank $q$ 
and $\ed$, $\cd=\pm*\ed*$, $*$ denote the exterior derivative,
co-derivative and Hodge's star operator, respectively.
$\Dqom$ is the Hilbert space of all $\Ltqom$ forms
having weak exterior derivative in $\Ltqpoom$ and 
by $\Dqcgatom$ we denote the closure of smooth forms
vanishing in a neighborhood of $\gat$ with respect to the 
natural graph norm of $\Dqom$. The same construction is used
to define the corresponding Hilbert spaces for the co-derivative $\Deqom$.
Moreover, we introduce $\harmdiq:=\Dqczgatom\cap\Deqczganom$, 
the finite-dimensional space of generalized Dirichlet-Neumann forms.
In classical terms, we have
$$E\in\harmdiq\qequi
\ed E=0,\quad\cd E=0,\quad\iota_{\gat}^{*}E=0,\quad\iota_{\gan}^{*}*E=0,$$
where $\iota_{\gat}:\gat\hookrightarrow\ga\hookrightarrow\om$
denotes the canonical embedding.

Our new inequality, i.e., Theorem \ref{mainlem}, 
together with the generalized Poincar\'e inequality \eqref{formineq} 
imply the three well known fundamental inequalities, i.e.,
\begin{itemize}
\item[1.] the Maxwell inequality, i.e., Lemma \ref{poincaremax},
\item[2.] Poincar\'e's inequality \eqref{poincareestcl},
\item[3.] Korn's inequality, i.e., Lemma \ref{korn} (i).
\end{itemize}
Figure \ref{diagram} illustrates this fact and
Figure \ref{qformtable} shows an identification table 
for $q$-forms and corresponding vector proxies.
\end{rem}

\begin{figure}
\begin{center}
\begin{tabular}{|c||c|c|c|c|}
\hline
$q$ & $0$ & $1$ & $2$ & $3$\\
\hline\hline
$\ed$ & $\grad$ & $\curl$ & $\div$ & $0$\\
\hline
$\cd$ & $0$ & $\div$ & $-\curl$ & $\grad$\\
\hline
$\Dqcgatom$ & $\Hgcgatom$ & $\Hccgatom$ & $\Hdcgatom$ & $\Ltom$\\
\hline
$\Deqcganom$ & $\Ltom$ & $\Hdcganom$ & $\Hccganom$ & $\Hgcganom$\\
\hline
$\iota_{\gat}^{*}E$ & $E|_{\gat}$ & $\nu\times E|_{\gat}$ & $\nu\cdot E|_{\gat}$ & $0$\\
\hline
$\circledast\iota_{\gan}^{*}*E$ & $0$ & $\nu\cdot E|_{\gan}$ & $-\nu\times(\nu\times E)|_{\gan}$ & $E|_{\gan}$\\[0.5mm]
\hline
\end{tabular}
\end{center}
\caption{identification table for $q$-forms and vector proxies in $\rt$}
\mylabel{qformtable}
\end{figure}

\subsection{A generalization: media with structural changes}

Let $\gat\neq\emptyset$ throughout this subsection. 
We consider the case of media with structural changes, see
\cite{Klawonn_Neff_Rheinbach_Vanis09,Neff_micromorphic_rse_05}.
To handle this case we use a result by Neff \cite{Neff00b}, 
later improved by Pompe \cite{pompekorn}, c.f., \eqref{korn_neff}.
To apply the main result from \cite{pompekorn},
let $\TF\in\Czomb$ be a $(3\times3)$-matrix field
satisfying $\det\TF\geq\mu$ with some $\mu>0$. 
Then, there exists a constant $\cksF>0$, 
such that for all $v\in\HGcgatom$
\begin{align}
\mylabel{kornpompe}
\normLtom{\Grad v}\leq\cksF\normLtom{\sym(\Grad v\,\TF)}.
\end{align}
For tensor fields $\T\in\HCom$ we define the semi-norm
$\dnormF{\,\cdot\,}$ by
\begin{align}
\mylabel{hsymnormdefF}
\dnormF{\T}^2:=\normLtom{\sym(\T\TF)}^2+\normLtom{\Curl\T}^2.
\end{align}
Furthermore, there exists a constant $\cF>0$ such that
for all $\T\in\Ltom$
$$\normLtom{\T\TF}\leq\cF\normLtom{\T}.$$

Let us first generalize Lemma \ref{kornlem}.

\begin{lem}
\mylabel{kornlemF}
{\sf(Generalized Korn's First Inequality: Tangential Version)}
There exists a constant $\cktF\geq\cksF$, 
such that the inequality
$$\normLtom{\Grad v}\leq\cktF\normLtom{\sym(\Grad v\TF)}$$
holds for all vector fields
$v\in\HGom$ with $\Grad v\in\HCczgatom$.
\end{lem}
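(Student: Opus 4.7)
The approach is to mimic almost verbatim the proof of Lemma \ref{kornlem}, simply replacing the appeal to the classical Korn inequality from Lemma \ref{korn} (i) with an appeal to the Neff--Pompe inequality \eqref{kornpompe}. The condition $\Grad v\in\HCczgatom$ will again be translated, via Lemma \ref{constlem}, into the statement that $v$ agrees with a constant vector on every connected component of $\gat$, which reduces the problem to one covered by \eqref{kornpompe}.

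Concretely, first I would fix a non-empty relatively open connected component $\tilde{\ga}$ of $\gat$. Since $\Grad v\in\HCczgatom$, each row $\grad v_{n}$ lies in $\Hcczgatom$, so Lemma \ref{constlem} supplies scalars $c_{v,n}\in\rz$ such that $v_{n}-c_{v,n}$ has vanishing trace on $\tilde{\ga}$. Collecting these into the constant vector $c_{v}:=(c_{v,1},c_{v,2},c_{v,3})^{\top}\in\rt$ gives $v-c_{v}\in\HGgen{}{\circ}{\tilde{\ga},\om}$.

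Second, I would apply the Neff--Pompe inequality \eqref{kornpompe} to $v-c_{v}$, but with the boundary piece $\gat$ of that inequality replaced by the (still non-empty, relatively open, Lipschitz) component $\tilde{\ga}$. Since $\TF\in\Czomb$ and the assumption $\det\TF\geq\mu>0$ are global, \eqref{kornpompe} is applicable on $\HGgen{}{\circ}{\tilde{\ga},\om}$ with some constant $\cktF\geq\cksF$. Because $\Grad c_{v}=0$, this yields
$$\normLtom{\Grad v}=\normLtom{\Grad(v-c_{v})}\leq\cktF\normLtom{\sym\big(\Grad(v-c_{v})\,\TF\big)}=\cktF\normLtom{\sym(\Grad v\,\TF)},$$
as desired.

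The only point requiring care is the transfer of \eqref{kornpompe} from $\HGcgatom$ to $\HGgen{}{\circ}{\tilde{\ga},\om}$ with a possibly larger constant; but this is routine, since $\tilde{\ga}$ is itself a relatively open Lipschitz subset of $\ga$ and the result of \cite{pompekorn} applies to any such piece. There is no genuine obstacle here; the proof is essentially a one-step reduction to Pompe's estimate, using Lemma \ref{constlem} to convert the incompatible tangential boundary condition $\Grad v\in\HCczgatom$ into a Dirichlet-type condition on a connected component of $\gat$.
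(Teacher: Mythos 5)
Your proof is correct and is essentially the same argument as the paper's, which simply remarks that the proof of Lemma \ref{kornlemF} is identical to that of Lemma \ref{kornlem} with \eqref{kornpompe} substituted for Lemma \ref{korn} (i); you spell out that reduction explicitly, including the use of Lemma \ref{constlem} to produce $c_v$ and the observation that $\Grad c_v=0$ leaves $\sym(\Grad v\,\TF)$ unchanged.
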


\begin{proof}
The proof is identical with the one of Lemma \ref{kornlem}
using \eqref{kornpompe} instead of Lemma \ref{korn} (i).
\end{proof}

We can generalize Lemma \ref{genkornlem}.

\begin{lem}
\mylabel{genkornlemF}
{\sf(Generalized Korn's Inequality: Irrotational Version)}
There exists a constant $\ckF\geq\cktF$, 
such that the inequality
$$\normLtom{\T}\leq\ckF\normLtom{\sym(\T\TF)}$$
holds for all tensor fields $\T\in\HCczgatom$.
\end{lem}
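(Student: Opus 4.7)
The plan is to mirror, almost verbatim, the proof of Lemma \ref{genkornlem} (i), but with Lemma \ref{kornlem} replaced by its generalized version Lemma \ref{kornlemF}. Since we assume throughout this section that $\om$ is sliceable, we decompose $\om = \om_{1} \cup \ldots \cup \om_{J}$ according to Definition \ref{domdef}, with each $\om_{j}$ open, disjoint, simply connected, and with $\gatj \neq \emptyset$.

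First I would fix $\T \in \HCczgatom$ and form the restriction $\T_{j} := \T|_{\om_{j}}$. By the same density argument as in the proof of Lemma \ref{genkornlem} (i) (approximating $\T$ in $\HCom$ by sequences in $\Cic(\gat;\om)$ and restricting), one obtains $\T_{j} \in \HCgen{0}{\circ}{\gatj,\om_{j}}$. Since $\om_{j}$ is simply connected and $\Curl \T_{j} = 0$, there exists a potential $v_{j} \in \HGgen{}{}{\om_{j}}$ with $\Grad v_{j} = \T_{j}$, so that in particular $\Grad v_{j} \in \HCgen{0}{\circ}{\gatj,\om_{j}}$.

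Next I would apply Lemma \ref{kornlemF} locally on each $\om_{j}$. The crucial point is that $\TF \in \Czomb$ restricts to an element of $\Cgen{0}{}{\ol{\om_{j}}}$ still satisfying $\det \TF \geq \mu > 0$ on $\ol{\om_{j}}$, so the hypothesis of Lemma \ref{kornlemF} is met on each piece (with $\gat$ replaced by $\gatj$). This yields constants $\cktFj > 0$ with
\begin{align*}
\norm{\T_{j}}_{\Lt(\om_{j})}
= \norm{\Grad v_{j}}_{\Lt(\om_{j})}
\leq \cktFj\, \norm{\sym(\Grad v_{j}\,\TF)}_{\Lt(\om_{j})}
= \cktFj\, \norm{\sym(\T_{j}\TF)}_{\Lt(\om_{j})}.
\end{align*}

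Finally, I would square and sum these estimates over $j = 1,\dots,J$. Since $\om\setminus(\om_{1}\cup\ldots\cup\om_{J})$ is a nullset, we obtain
\begin{align*}
\normLtom{\T}^{2} = \sum_{j=1}^{J} \norm{\T_{j}}_{\Lt(\om_{j})}^{2}
\leq \ckF^{2} \sum_{j=1}^{J} \norm{\sym(\T_{j}\TF)}_{\Lt(\om_{j})}^{2}
= \ckF^{2}\, \normLtom{\sym(\T\TF)}^{2}
\end{align*}
with $\ckF := \max_{j=1,\dots,J} \cktFj \geq \cktF$, finishing the proof. I do not foresee any real obstacle: the argument is essentially a reduction to the already-established Lemma \ref{kornlemF} via sliceability and the Poincar\'e lemma on simply connected domains. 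The only mild care needed is to observe that Pompe's hypothesis on $\TF$ is inherited by each subdomain $\om_{j}$ and that the resulting constants, being finitely many, can be uniformly bounded by their maximum.
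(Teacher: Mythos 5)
Your proof is correct and is exactly what the paper intends: the paper disposes of this lemma in one line, declaring the argument identical to that of Lemma \ref{genkornlem} (i) with Lemma \ref{kornlemF} substituted for Lemma \ref{kornlem}, which is precisely the slicing, potential-extraction, and piece-wise Korn estimate you have spelled out. The observation that Pompe's hypotheses on $\TF$ pass to each subdomain $\om_{j}$ is the right sanity check and is indeed immediate from $\TF\in\Czomb$ and $\det\TF\geq\mu>0$ on all of $\omb$.
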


\begin{proof}
The proof is identical with the one of Lemma \ref{genkornlem} (i)
using Lemma \ref{kornlemF} instead of Lemma \ref{kornlem}.
\end{proof}

Finally, we get:

\begin{theo}
\mylabel{maintheoF}
Let $\chF:=\max\{\sqrt{2}\ckF,\cm\sqrt{1+2\ckF^2\cF^2}\}$.
There exists $c>0$ such that for all $\T\in\HCcgatom$ 
$$\normLtom{\T}\leq\chF\dnormF{\T},\quad
\norm{\T}_{\HCom}\leq c\dnormF{\T}.$$
\end{theo}

\begin{proof}
It is sufficient to prove the first estimate.
Let $\T\in\HCcgatom$. 
Again, we follow in close lines the proof of Theorem \ref{mainlem} (i).
With the same notations 
and using Lemma \ref{genkornlemF} instead of Lemma \ref{genkornlem} (i) we see
\begin{align*}
\normLtom{\T}^2
&=\normLtom{\TR}^2+\normLtom{\TS}^2
\leq\ckF^2\normLtom{\sym(\TR\TF)}^2+\normLtom{\TS}^2\\
&\leq2\ckF^2\normLtom{\sym(\T\TF)}^2+2\ckF^2\normLtom{\sym(\TS\TF)}^2
+\normLtom{\TS}^2\\
&\leq2\ckF^2\normLtom{\sym(\T\TF)}^2+(1+2\ckF^2\cF^2)\normLtom{\TS}^2
\end{align*}
and thus $\normLtom{\T}^2\leq\chF^2\dnormF{\T}^2$.
\end{proof}

\subsection{More generalizations}

Finally we note that there are a lot more generalizations.
In future contributions we will also prove versions of our estimates
\begin{itemize}
\item in $\Lpom$ spaces (possibly just for $p$ near to 2),
\item in unbounded domains, like exterior domains,
\item for domains $\om\subset\rN$ (using differential forms),
\item with inhomogeneous (restricted) tangential traces,
\item concerning the deviatoric part of a tensor.
\end{itemize}

\subsection{Conjectures}

In view of one of the estimates which we have proved 
in this contribution, i.e., Theorem \ref{mainlem} (ii'), 
and the rigidity estimate \eqref{rigidity_friesecke} we speculate that
\begin{align*}
\min_{R\in\SO(3)}\int_{\om}\dist^p(\T,R)\dl
\leq\ck^p\int_{\om}\big(\dist^p(\T,\SO(3))+|\Curl\T|^p\big)\dl
\end{align*}
may hold for some $1<p<\infty$.\footnote{$\ds\int_{\om}\dist^p(\T,\SO(3))\dl$ 
gives an $\Lpom$-control of $\T$ for free, 
contrary to our infinitesimal version, Theorem \ref{mainlem} (ii').}    

A result in Garroni et al. \cite[Th.9]{Garroni10} 
states that for $\om\subset\rz^2$ having Lipschitz boundary\footnote{The
authors assume implicitly that $\om$ is sliceable and probably simply connected.}
there exists $c>0$ such that
\begin{align}
\mylabel{garroni_estimate}
\normLtom{\T}\leq c\big(\normLtom{\sym\T}+|\Curl\T|(\om)\big)
\end{align}
holds for all $\T\in\Loom$ with $\Tskew_{\T}=0$
and $\Tskew_{\T}$ from \eqref{defTpi}.
Here, the term $|\Curl\T|(\om)$ denotes the total variation measure 
of the $\Curl$-operator. 
However, the employed methods are restricted 
to the two-dimensional case since decisive use is made 
of the crucial $\rz^2$-identity $\curl(v_1,v_2)=\div(-v_2,v_1)$, 
see our discussion in \cite{neffpaulywitschgenkornrtzap}.

In view of the inequality \eqref{garroni_estimate} 
we conjecture that for a sliceable
(and maybe simply connected) domain $\om\subset\rN$ 
there exists $c>0$ such that
\begin{align}
\label{conjecture_estimate}
\norm{\T}_{\Lgen{N/(N-1)}{}(\om)}\leq c\big(\normLtom{\sym\T}+|\Curl\T|(\om)\big)
\end{align}
holds for all $\T\in\Loom$ with $\Tskew_{\T}=0$,
where $\Curl\T$ is the natural generalization 
of the $\Curl$-operator to higher dimensions, 
see \cite{neffpaulywitschgenkornrn}. 
This conjecture is based on the observation, 
that for $N=3$ and $\T$ already skew-symmetric 
one cannot be better than 
the well-known Poincar\'e-Wirtinger inequality in $\BVom$, i.e.,
\begin{align*}
\norm{u-\alpha_{u}}_{\Lgen{N/(N-1)}{}(\om)}\leq c|\na u|(\om),\quad
\alpha_{u}:=\pi_{\rz}u=\oint_{\om}u\dl\in\rz.
\end{align*}
The relevance of the latter for \eqref{conjecture_estimate}
is clear by taking into account that
for skew-symmetric matrices $\T$, 
$\Curl\T$ can be interchanged with all partial derivatives, 
see inequality \eqref{trivcurlest}. 
However, new methods have to be developed to tackle this problem.

\appendix

\section{Appendix}

\subsection{Korn's first inequality with full Dirichlet boundary condition}

We note some simple estimates 
concerning the most elementary version 
of Korn's first inequality
for a domain $\om\subset\rN$, $N\in\nz$.
By twofold partial integration we get
$$\scpLtom{\p_{n}v_{m}}{\p_{m}v_{n}}=\scpLtom{\p_{m}v_{m}}{\p_{n}v_{n}}$$
for all smooth vector fileds $v\in\Cicom$ and hence
\begin{align}
\begin{split}
\mylabel{korntrivialcalcone}
\normLtom{\sym\na v}^2
&=\frac{1}{4}\sum_{n,m=1}^N\normLtom{\p_{n}v_{m}+\p_{m}v_{n}}^2\\
&=\frac{1}{2}\sum_{n,m=1}^N\big(\normLtom{\p_{n}v_{m}}^2
+\scpLtom{\p_{n}v_{m}}{\p_{m}v_{n}}\big)\\
&=\frac{1}{2}\big(\normLtom{\na v}^2+\normLtom{\div v}^2\big)
=\frac{1}{2}\normLtom{\curl v}^2+\normLtom{\div v}^2,
\end{split}
\end{align}
which holds for all $v\in\Hocom$ as well.
For a quadaratic matrix $\T$ and $\alpha\in\rz$ 
we define the deviatoric part by
$$\dev_{\alpha}\T:=\T-\alpha\tr\T\id.$$
Then, for any $\alpha\in\rz$ and $v\in\Hoom$
we obtain for the deviatoric part of the symmetric gradient
\begin{align}
\begin{split}
\mylabel{korntrivialcalctwo}
\normLtom{\dev_{\alpha}\sym\na v}^2
&=\normLtom{\sym\na v-\alpha\div v\id}^2\\
&=\normLtom{\sym\na v}^2
+c_{\alpha}\normLtom{\div v}^2,\quad
c_{\alpha}:=\alpha(N\alpha-2).
\end{split}
\end{align}
Combining \eqref{korntrivialcalcone} and \eqref{korntrivialcalctwo} 
we get for any $\alpha\in\rz$ and $v\in\Hocom$
\begin{align*}
\normLtom{\dev_{\alpha}\sym\na v}^2
&=\frac{1}{2}\normLtom{\na v}^2
+(c_{\alpha}+\frac{1}{2})\normLtom{\div v}^2\\
&\geq\frac{1}{2}\normLtom{\na v}^2
+\frac{N-2}{2N}\normLtom{\div v}^2,\\
\normLtom{\dev_{\alpha}\sym\na v}^2
&=\frac{1}{2}\normLtom{\curl v}^2
+(c_{\alpha}+1)\normLtom{\div v}^2\\
&\geq\frac{1}{2}\normLtom{\curl v}^2
+\frac{N-1}{N}\normLtom{\div v}^2,\\
\normLtom{\dev_{\alpha}\sym\na v}^2
&=(2c_{\alpha}+1)\normLtom{\sym\na v}^2
-c_{\alpha}\normLtom{\na v}^2\\
&\geq\frac{N-2}{N}\normLtom{\sym\na v}^2
-c_{\alpha}\normLtom{\na v}^2,\\
\normLtom{\dev_{\alpha}\sym\na v}^2
&=(c_{\alpha}+1)\normLtom{\sym\na v}^2
-\frac{c_{\alpha}}{2}\normLtom{\curl v}^2\\
&\geq\frac{N-1}{N}\normLtom{\sym\na v}^2
-\frac{c_{\alpha}}{2}\normLtom{\curl v}^2.
\end{align*}
We note 
$$
c_{\alpha}=\ct_{\alpha}-\frac{1}{N},\quad
c_{\alpha}+\frac{1}{2}=\ct_{\alpha}+\frac{N-2}{2N},\quad
c_{\alpha}+1=\ct_{\alpha}+\frac{N-1}{N},\quad
\ct_{\alpha}:=N(\alpha-\frac{1}{N})^2.$$
Let us define $I:=(0,\frac{2}{N})$ 
and collect some resulting estimates: 
For all $\alpha\in\rz$ and all $v\in\Hoom$
\begin{align*}
\normLtom{\div v}&\leq\sqrt{N}\normLtom{\na v},\\
\normLtom{\sym\na v}&\leq\normLtom{\na v},\\
\normLtom{\sym\na v}&\leq\normLtom{\dev_{\alpha}\sym\na v}
&&\forall\,\alpha\in\rz\setminus I,\\
\normLtom{\div v}&\leq\frac{1}{\sqrt{c_{\alpha}}}\normLtom{\dev_{\alpha}\sym\na v}
&&\forall\,\alpha\in\rz\setminus\bar{I},\\
\normLtom{\dev_{\alpha}\sym\na v}&\leq\normLtom{\sym\na v}
&&\forall\,\alpha\in\bar{I}.
\intertext{For all $\alpha\in\rz$ and all $v\in\Hocom$}
\normLtom{\curl v},\normLtom{\div v}&\leq\normLtom{\na v},\\
\normLtom{\div v}&\leq\normLtom{\sym\na v},\\
\normLtom{\div v}&\leq\sqrt{\frac{N}{N-1}}\normLtom{\dev_{\alpha}\sym\na v},\\
\normLtom{\na v}&\leq\sqrt{2}\normLtom{\sym\na v},\\
\normLtom{\na v}&\leq\sqrt{2}\normLtom{\dev_{\alpha}\sym\na v},\\
\normLtom{\dev_{\alpha}\sym\na v}&\leq\sqrt{c_\alpha+1}\normLtom{\sym\na v}
&&\forall\,\alpha\in\rz\setminus I,\\
\normLtom{\sym\na v}&\leq\frac{1}{\sqrt{c_\alpha+1}}\normLtom{\dev_{\alpha}\sym\na v}
&&\forall\,\alpha\in\bar{I}.
\end{align*}
We note that the most important case is $\alpha=1/N\in I$, 
where we have $\ct_{\alpha}=0$ and $c_{\alpha}+1=(N-1)/N$ as well as
$$\frac{1}{\sqrt{2}}\normLtom{\na v}\leq\normLtom{\dev_{\frac{1}{N}}\sym\na v}
\leq\normLtom{\sym\na v}\leq\normLtom{\na v}$$
for all $v\in\Hocom$.

\subsection{Korn's first inequality without boundary condition}

By Rellich's selection theorem for $\Hoom$,
Korn's second inequality
and normalization one gets
\begin{align}
\mylabel{kornwithoutbconeapp}
\normLtom{\na v}
&\leq\ck\normLtom{\sym\na v}
\intertext{for all $v\in\Hoom$ with $\na v\bot\so(3)$.
Equivalently, one has for all $v\in\Hoom$} 
\begin{split}
\mylabel{kornwithoutbctwoapp}
\normLtom{(\id-\pi_{\so(3)})\na v}
&\leq\ck\normLtom{\sym\na v},\\
\normLtom{\na v}
&\leq\ck\big(\normLtom{\sym\na v}^2+\normLtom{\pi_{\so(3)}\na v}^2\big)^{1/2}.
\end{split}
\end{align}
Here $\pi_{\so(3)}:\Ltom\to\so(3)$ denotes the $\Ltom$-orthogonal projection 
onto $\so(3)$ and can be expressed explicitly by
$$\pi_{\so(3)}\TP:=\sum_{\ell=1}^{3}\scpLtom{\TP}{\Tskew^{\ell}}\Tskew^{\ell},\quad
\normLtom{\pi_{\so(3)}\TP}^2=\sum_{\ell=1}^{3}|\scpLtom{\TP}{\Tskew^{\ell}}|^2,$$
where $(\Tskew^{\ell})_{\ell=1}^{3}$ is an $\Ltom$-orthonormal basis of $\so(3)$.
Note that $\big(\lambda(\om)^{1/2}\Tskew^{\ell}\big)_{\ell=1}^{3}$ is also an 
$\rttt$-orthonormal basis of $\so(3)$ and thus we have the representation
$$\pi_{\so(3)}\TP
=\sum_{\ell=1}^{3}\scps{\skew\int_{\om}\TP\dl}{\Tskew^{\ell}}_{\rttt}\Tskew^{\ell}
=\skew\oint_{\om}\TP\dl
=:\Tskew_{\TP}\in\so(3).$$
Poincar\'e's inequality for vector fields by normalization reads 
\begin{align}
\mylabel{poincarewithoutbconeapp}
\normLtom{v}
&\leq\cp\normLtom{\na v}
\intertext{for all $v\in\Hoom$ with $v\bot\,\rt$.
Equivalently, one has for all $v\in\Hoom$}
\begin{split}
\mylabel{poincarewithoutbctwoapp}
\normLtom{(\id-\pi_{\rt})v}
&\leq\cp\normLtom{\na v},\\
\normLtom{v}
&\leq\cp\big(\normLtom{\na v}^2+\normLtom{\pi_{\rt}v}^2\big)^{1/2}
\end{split}
\intertext{and hence}
\norm{(\id-\pi_{\rt})v}_{\Hoom}
&\leq(1+\cp^2)^{1/2}\normLtom{\na v},\nonumber\\
\norm{v}_{\Hoom}
&\leq(1+\cp^2)^{1/2}\big(\normLtom{\na v}^2+\normLtom{\pi_{\rt}v}^2\big)^{1/2}.\nonumber
\end{align}
Here $\pi_{\rt}:\Ltom\to\rt$ denotes the $\Ltom$-orthogonal projection 
onto $\rt$ and can be expressed explicitly by
$$\pi_{\rt}v:=\sum_{\ell=1}^{3}\scpLtom{v}{e^{\ell}}e^{\ell},\quad
\normLtom{\pi_{\rt}v}^2=\sum_{\ell=1}^{3}|\scpLtom{v}{e^{\ell}}|^2,$$
where $(e^{\ell})_{\ell=1}^{3}$ is an $\Ltom$-orthonormal basis of $\rt$.
Note that $\big(\lambda(\om)^{1/2}e^{\ell}\big)_{\ell=1}^{3}$ is also an 
$\rt$-orthonormal basis of $\rt$ and thus we have the representation
$$\pi_{\rt}v=\sum_{\ell=1}^{3}\scps{\int_{\om}v\dl}{e^{\ell}}_{\rt}e^{\ell}
=\oint_{\om}v\dl=:a_{v}\in\rt.$$
Combining \eqref{kornwithoutbconeapp} and \eqref{poincarewithoutbconeapp} we obtain
\begin{align}
\mylabel{kornwithpoincarewithoutbconeapp}
(1+\cp^2)^{-1/2}\norm{v}_{\Hoom}\leq\normLtom{\na v}\leq\ck\normLtom{\sym\na v}
\end{align}
for all $v\in\Hoom$ with $\na v\bot\so(3)$ and $v\bot\,\rt$.
Without these conditions one has
\begin{align}
(1+\cp^2)^{-1/2}\norm{(\id-\pi_{\RM})v}_{\Hoom}
&\leq\normLtom{(\id-\pi_{\so(3)})\na v}
\leq\ck\normLtom{\sym\na v},\nonumber\\
(1+\cp^2)^{-1/2}\norm{v}_{\Hoom}
&\leq\big(\normLtom{\na v}^2
+\normLtom{\pi_{\rt}v}^2\big)^{1/2}\mylabel{kornwithpoincarewithoutbctwoapp}\\
&\leq\ck\big(\normLtom{\sym\na v}^2
+\normLtom{\pi_{\so(3)}\na v}^2+\normLtom{\pi_{\rt}v}^2\big)^{1/2}\nonumber\\
&\leq\ck\big(\normLtom{\sym\na v}^2
+\norm{\pi_{\RM}v}_{\Hoom}^2\big)^{1/2},\nonumber
\end{align}
for all $v\in\Hoom$, where $\pi_{\RM}:\Hoom\to\RM$ is defined by
$$\pi_{\RM}v:=(\pi_{\so(3)}\na v)\xi+\pi_{\rt}\big(v-(\pi_{\so(3)}\na v)\xi\big)
=\pi_{\rt}v+(\id-\pi_{\rt})\big((\pi_{\so(3)}\na v)\xi\big)$$
with the identity function $\xi(x):=\id(x)=x$. We note
\begin{align*}
\pi_{\so(3)}\na v&=\Tskew_{\na v}=\skew\oint_{\om}\na v\dl\in\so(3),&
\pi_{\rt}v&=a_{v}=\oint_{\om}v\dl\in\rt,\\
\pi_{\RM}v&=r_{v}:=\Tskew_{\na v}\xi+a_{v}-\Tskew_{\na v}a_{\xi}\in\RM,&
\na\pi_{\RM}v&=\na r_{v}=\Tskew_{\na v}=\pi_{\so(3)}\na v\in\so(3).
\end{align*}
Note that $u:=(\id-\pi_{\RM})v=v-r_{v}$ belongs to $\Hoom$ and satisfies 
$$\na u=(\id-\pi_{\so(3)})\na v\bot\so(3),\quad
u=(\id-\pi_{\rt})\big(v-(\pi_{\so(3)}\na v)\xi\big)\bot\,\rt.$$
Hence \eqref{kornwithpoincarewithoutbconeapp} holds for $u$.
Moreover, we have for $v\in\Hoom$
$$\pi_{\RM}v=0\qequi\pi_{\so(3)}\na v=0\,\wedge\,\pi_{\rt}v=0
\qequi\na v\bot\so(3)\,\wedge\,v\bot\,\rt.$$
This can also be seen be elementary calculations:
For all $\Tskew\in\so(3)$ and all $a\in\rt$ we have
\begin{align}
\mylabel{AvAscp}
\begin{split}
\scpLtom{\Tskew_{\na v}}{\Tskew}
&=\scps{\int_{\om}\na v\dl}{\Tskew}_{\rttt}
=\int_{\om}\scp{\na v}{\Tskew}_{\rttt}\dl
=\scpLtom{\na v}{\Tskew},\\
\scpLtom{r_{v}}{a}
&=\scps{\int_{\om}r_{v}\dl}{a}_{\rt}
=\scps{\Tskew_{\na v}\int_{\om}\xi\dl+\lambda(\om)(a_{v}-\Tskew_{\na v}a_{\xi})}{a}_{\rt}\\
&=\scps{\int_{\om}v\dl}{a}_{\rt}=\scpLtom{v}{a}.
\end{split}
\end{align}
Thus $\na u\bot\so(3)$ and $u\bot\,\rt$.
This shows also that $\na v\bot\so(3)$ and $v\bot\,\rt$ if we have $r_{v}=0$.
On the other hand, if $v\in\Hoom$ with $\na v\bot\so(3)$ and $v\bot\,\rt$,
then $r_{v}=0$ because $\Tskew_{\na v}=0$
by setting $\Tskew:=\Tskew_{\na v}\in\so(3)$
and then $a_{v}=0$ by setting $a:=a_{v}=r_{v}$.

\begin{acknow}
We heartily thank Kostas Pamfilos for his beautiful pictures of sliceable domains.
\end{acknow}

\bibliographystyle{plain} 
\bibliography{/Users/paule/Library/texmf/tex/TeXinput/bibtex/paule,/Users/paule/Library/texmf/tex/TeXinput/bibtex/patrizio}

\end{document}